\newcommand{\erf}[1]{\ensuremath{\mathrm{erf}\left[#1\right]}}
\newcommand{\vect}[1]{\ensuremath{{\bm{#1}}}}
\newcommand{\smallo}
\theoremstyle{plain}
\newtheorem{theorem}{Theorem}
\newtheorem*{remark}{Remark}
\newtheorem{lemma}[theorem]{Lemma}
\newtheorem{assumption}{Assumption}
\renewcommand{\P}{\operatorname{\mathbb{P}}}
\newcommand{\E}{\operatorname{\mathbb{E}}}
\DeclareSymbolFont{rsfs}{U}{rsfs}{m}{n}
\DeclareSymbolFontAlphabet{\mathscrsfs}{rsfs}
\newcommand{\one}{\mathbf{1}}
\newcommand{\R}{\mathbb{R}}
\newcommand{\Z}{\mathbb{Z}}
\newcommand{\bg}{\bm{g}}
\newcommand{\bx}{\bm{x}}
\newcommand{\by}{\bm{y}}
\newcommand{\bw}{\bm w}
\newcommand{\bG}{\bm{G}}
\newcommand{\bI}{\bm{I}}
\def\sSAT{\mbox{\rm\tiny SAT}}
\newcommand{\rmd}{\mathrm{d}}
\renewcommand{\hat}{\widehat}
\renewcommand{\pl}{\mbox{\rm\tiny pl}}
\newcommand{\rd}{\mbox{\rm\tiny rd}}
\newcommand{\kold}{\kappa_0}
\newcommand{\knew}{\kappa}
\newcommand{\tkold}{\tilde{\kappa}_0}
\newcommand{\tknew}{\tilde{\kappa}}
\newcommand{\kenerg}{\kappa_{\rm ener}}
\newcommand{\kentro}{\kappa_{\rm entr}}
\newcommand{\tkenerg}{\tilde{\kappa}_{\rm ener}}
\newcommand{\tkentro}{\tilde{\kappa}_{\rm entr}}
\DeclareSymbolFont{usualmathcal}{OMS}{cmsy}{m}{n}
\DeclareSymbolFontAlphabet{\mathcal}{usualmathcal}
\begin{document}

\begin{center}{\Large \textbf{
On the Atypical Solutions of the Symmetric Binary Perceptron}}\end{center}

\begin{center}
Damien Barbier\textsuperscript{1},
Ahmed El Alaoui\textsuperscript{2},
Florent Krzakala\textsuperscript{1} and
Lenka Zdeborov\'a\textsuperscript{3}
\end{center}

\begin{center}
{\bf 1} Information Learning \& Physics laboratory, 
\'Ecole Polytechnique F\'ed\'erale de Lausanne, Switzerland
\\
{\bf 2} Department of Statistics and Data Science, Cornell University, Ithaca, NY, USA
\\
{\bf 3} Statistical Physics of Computation laboratory, 
\'Ecole Polytechnique F\'ed\'erale de Lausanne, Switzerland 
\\

\end{center}

\begin{abstract}
We study the random binary symmetric perceptron problem, focusing on the behavior of rare high-margin solutions. 
While most solutions are isolated, we demonstrate that these rare solutions are part of clusters of extensive entropy, heuristically corresponding to non-trivial fixed points of an approximate message-passing algorithm. We enumerate these clusters via a local entropy, defined as a Franz-Parisi potential, which we rigorously evaluate using the first and second moment methods in the limit of a small constraint density $\alpha$ (corresponding to vanishing margin $\kappa$) under a certain assumption on the concentration of the entropy. This examination unveils several intriguing phenomena: i) We demonstrate that these clusters have an entropic barrier in the sense that the entropy as a function of the distance from the reference high-margin solution is non-monotone when $\knew \le 1.429 \sqrt{-\alpha/\log{\alpha}}$, while it is monotone otherwise, and that they have an energetic barrier in the sense that there are no solutions at an intermediate distance from the reference solution when $\knew \le 1.239 \sqrt{-\alpha/ \log{\alpha}}$. 
The critical scaling of the margin $\kappa$ in $\sqrt{-\alpha/\log\alpha}$ corresponds to the one obtained from the earlier work of Gamarnik et al.~\cite{gamarnik2022algorithms} for the overlap-gap property, a phenomenon known to present a barrier to certain efficient algorithms. 
ii) We establish using the replica method that the complexity (the logarithm of the number of clusters of such solutions) versus entropy (the logarithm of the number of solutions in the clusters) curves are partly non-concave and correspond to very large values of the Parisi parameter, with the equilibrium being reached when the Parisi parameter diverges.
\end{abstract}

\maketitle

\section{Introduction}
\subsection{Background and Motivation}
We consider the symmetric binary perceptron (SBP), introduced in \cite{aubin2019storage}, where we let $\bG = (\bg_a)_{a=1}^M$ be a collection of $M$ i.i.d.\ standard Gaussian random vectors in $\R^N$, with $M=\lfloor\alpha N \rfloor$ for a fixed $\alpha >0$ and for $\knew >0$, we consider the set of binary solutions $\bx \in \{-1,+1\}^N$ to the system of linear inequalities
\begin{equation}\label{eq:perceptron}
\big| \langle \bg_a , \bx \rangle \big| \le \knew \sqrt{N} ~~~~~ \mbox{for all}~ 1 \le a \le M\, .
\end{equation}    
We denote the set of solutions by $S(\bG, \knew)$, and its cardinality by  
\begin{equation}
Z(\bG,\knew) = \big|S(\bG, \knew)\big|\, .
\end{equation} 
It was shown by Aubin, Perkins and Zdeborov\'a~\cite{aubin2019storage} that $S(\bG,\knew)$ is nonempty with high probability if and only if $\kappa > \kappa_{\sSAT}(\alpha)$ where $\kappa_{\sSAT}(\alpha)$ is defined by the equation 
\begin{equation}
\P\big(|Z| \le \kappa\big)^{\alpha} = 1/2\, ,~~~~ Z \sim N(0,1)\, .
\end{equation}
Moreover, in the limit of small $\alpha$ we have 
\begin{equation}
\kappa_{\sSAT}(\alpha) \underset{\alpha\rightarrow0}{\sim}\sqrt{\frac{\pi}{2}}\, 2^{-1/\alpha}\, .
\end{equation}
Our main interest is in investigating the possibility of finding solutions efficiently when $\kappa > \kappa_{\sSAT}(\alpha)$.  

M\'ezard and Krauth \cite{krauth89storage} showed in their seminal work using the non-rigorous replica method \cite{mezard1987spin} that the solution landscape of the one-sided perceptron (where there is no absolute value in the constraints~\eqref{eq:perceptron}) is dominated by \emph{isolated} solutions lying at large mutual Hamming distances, a structure sometimes called ``frozen replica symmetry breaking'' \cite{martin2004frozen,zdeborova2008locked,huang2013entropy,huang2014origin,Semerjian}.  From the mathematics point of view, 
the frozen replica symmetry breaking prediction was proven true for the SBP in works by Perkins and Xu~\cite{perkins2021frozen} and Abb\'e, Li and Sly~\cite{abbe2022proof}, who showed that for all $\kappa > \kappa_{\sSAT}(\alpha)$, a solution drawn uniformly at random from $S(\bG,\knew)$ is \emph{isolated} with high probability, in the sense that it is separated from any other solution by a Hamming distance linear in $N$.

This type of landscape property has been traditionally associated with algorithmic hardness, with the rationale that an algorithm performing local moves is unlikely to succeed in the face of such extreme clustering, as argued, for instance, by Zdeborov\'a and M\'ezard~\cite{zdeborova2008constraint}, or Huang and Kabashima~\cite{huang2014origin}. In some problems, this predicted algorithmic hardness was confirmed empirically, e.g. \cite{zdeborova2008constraint,zdeborova2008locked}. In other problems, a prominent example being the binary perceptron (symmetric or not), it is known that certain efficient heuristics are able to find solutions for $\alpha$ small enough as a function of $\knew$~\cite{kim1998covering,braunstein2006learning,baldassi2007efficient,huang2013entropy,baldassi2015max,erba2023statistical,algo_fede}. 
Statistical physics studies of the neighborhood of the solutions returned by efficient heuristics have put forward the intriguing observation that  
in the binary perceptron problem, a dense region of other solutions surrounds the ones which are returned~\cite{baldassi2015subdominant,baldassi2016unreasonable,baldassi2016local}. 
This means that efficient algorithms may be drawn to \emph{rare, well connected} subset(s) of $S(\bG,\knew)$. 
Moreover, these efficient algorithms fail to return a solution when $\alpha$ becomes large, suggesting the existence of a \emph{computational phase transition} in the binary perceptron (symmetric or not). 

For the symmetric version of the problem, this state of affairs has been partially elucidated in two recent mathematical works: In~\cite{abbe2022binary}, Abb\'e et al.\  show the existence of clusters of solutions of linear diameter for all $\kappa > \kappa_{\sSAT}(\alpha)$, and maximal diameter for $\alpha$ small enough. In a different direction, Gamarnik et al.~\cite{gamarnik2022algorithms} established an almost sharp result in the regime of small $\alpha$, stating the following: There exists constants $c_0, c_1 >0 $ such that for $\alpha$ small enough, 
\begin{itemize}
\item if $\kappa \ge  c_0 \sqrt{\alpha}$ 
then a certain online algorithm of Bansal and Spencer~\cite{bansal2020line} finds a solution in $S(\bG,\kappa)$, and         
\item if $\kappa \le  c_1 \sqrt{-\alpha/\log(\alpha)}$ %
then $S(\bG,\kappa)$ exhibits a \emph{overlap gap property} ruling out a wide class  of efficient algorithms. 
\end{itemize}
We mention that the positive result which holds for $\kappa \ge  c_0 \sqrt{\alpha}$ is established in the case where the constraint matrix $\bG$ is Rademacher instead of Gaussian; nevertheless, the same result is expected in the Gaussian case.

Baldassi et al.~\cite{baldassi2020clustering} suggest that this computational transition can be probed by studying the monotonicity properties of the \emph{local entropy} of solutions around atypical solutions $\bx_0$ as a function of the distance from this solution. One can interpret the results of~\cite{baldassi2020clustering} as evidence towards a conjecture that finding a solution is computationally easy precisely when there exist some rare solutions around which this local entropy is monotone in the distance and that the problem becomes hard when this local entropy develops a local maximum at some distance $r_0$ from the reference solution $\bx_0$.  
If such a conjecture is correct, then it must agree with the above-mentioned finding of Gamarnik et al.~\cite{gamarnik2022algorithms} in the regime of small $\alpha$. This question motivated the present work.  

Another gap in the physics literature we elucidate in this work relates to the fact that the replica method on the one-step replica symmetry breaking level so far has not managed to find clusters of solutions in the binary perceptron. Indeed, the method can count rare clusters as long as they correspond to fixed points of a corresponding message-passing algorithm, see e.g.~\cite{zdeborova2007phase}. Parallels between the 1RSB calculation and the analysis of solutions with a monotonic local entropy have been put forward in \cite{baldassi2015subdominant,baldassi2016unreasonable,baldassi2016local}, but not in the form where one writes the standard 1RSB equations and shows that they have a solution corresponding to rare subdominant clusters. We show that the standard 1RSB framework actually does present such solutions which describe subdominant clusters of extensive entropy, and we give likely reasons why these solutions were missed in past investigations.

\subsection{Summary of our results}

\paragraph{Local entropy around high margin solutions:} We define and study a notion of local entropy around solutions which are typical at some margin $\kold <  \knew$. While typical solutions at $\kold$ are isolated from each other, it was shown in \cite{abbe2022binary} that they belong to connected components of solutions at margin $\knew$ having a linear diameter in $N$. Here, we show that these solutions are surrounded by exponentially many solutions at margin $\knew$.  

Consistently with the statistical physics literature, we say that there is a cluster of extensive entropy around a reference solution $\bx_0$ when the local entropy as a function of the distance achieves a local maximum at some distance from $\bx_0$. 
We show that for a certain range of $\knew$ typical solutions at margin $\kappa_0$ have extensive entropy clusters around them. We define the entropy of these clusters as the value of the entropy at a local maximum. An analogous investigation of local entropy around large margin solutions was performed in \cite{baldassi2021unveiling} for the one-sided binary perceptron using the replica method.

In our case, the symmetry of the constraints~\eqref{eq:perceptron} allows us to derive simpler formulas for the local entropy in the regime of small $\alpha$, essentially via a first moment method. This is due to the present model being contiguous to a corresponding simpler planted model in which the first and second moment computations can be conducted. We show that under a certain assumption on the concentration of the entropy of the SBP, while for any constant value of $\alpha$ the second moment is exponentially larger than the square of the first moment, the exponent of the ratio of these quantities, when normalized by $N$, tends to zero in the limit of small $\alpha$.

 The resulting entropy of these clusters is plotted in Fig.~\ref{fig:different_kappa0} for various values of $\knew$ and $\kold$ in the $\alpha\to 0$ limit. We observe that at a certain margin $\knew_{\rm entr}(\kold)$ the entropy curve stops because the local entropy curve becomes monotone in the distance for $\knew > \knew_{\rm entr}(\kold)$.
 As discussed above, the existence of reference solutions such that the local entropy curve is monotone was speculated to provoke the onset of a region of parameters where finding solutions is algorithmically easy. In this paper, we show the existence of solutions--those typical at $\kold$--for which the local entropy is monotone, and hence we do not expect the problem to be computationally hard for $\knew > \knew_{\rm entr}(\kold)$.
 In Fig.~\ref{fig:different_kappa0} we see that the smallest $\knew$ where this happens is $\knew_{\rm entr}  \equiv {\rm min}_{\kold} \knew_{\rm entr}(\kold) =\knew_{\rm entr}(\kold =\kappa_{\sSAT})$. 
 For this reason, a large part of this investigation is devoted to the case $\kold =\kappa_{\sSAT}(\alpha)$.

\begin{figure}[t]
    \centering
    \includegraphics[width=0.5\textwidth]{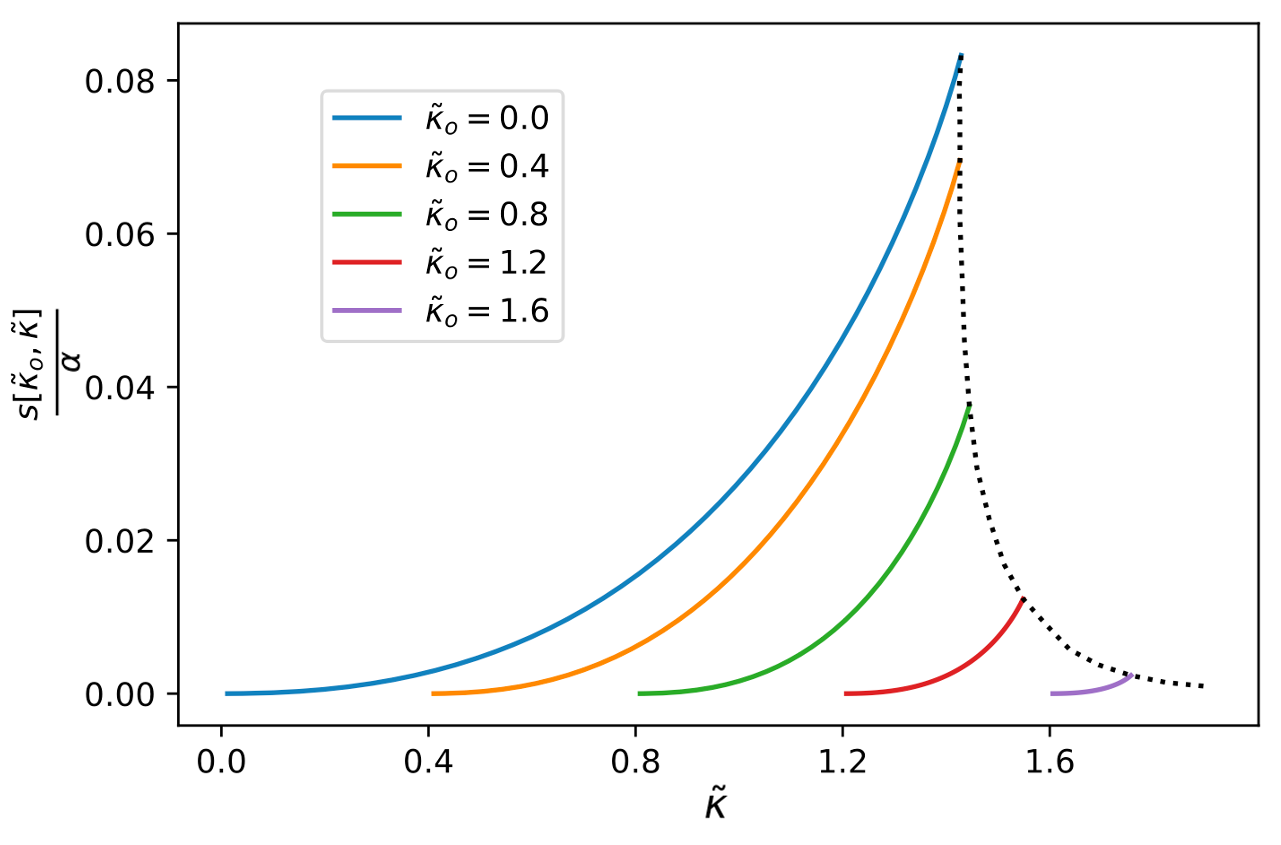}
     \caption{Entropy of clusters that exist at margin $\knew$ around a typical solution at margin $\kold$. We focus on the small $\alpha$ limit where the margins are rescaled as 
     $\knew=\tknew\sqrt{-\alpha/\log(\alpha)}$. and $\kold=\tkold\sqrt{-\alpha/\log(\alpha)}$. The dashed line corresponds to the envelope of the ending points for all values of $\kold$, i.e. to the entropies at which the clusters (non-trivial AMP/TAP fixed points) disappear. In particular, we observe that the clusters with $\tkold = \tilde \kappa_{\rm SAT}=0$ disappear first, thus marking a threshold of $\tknew_{\rm entr} \approx 1.429$ above which the so-called "wide-flat-minima" of \cite{baldassi2016unreasonable,baldassi2021unveiling} exist.}
    \label{fig:different_kappa0}
\end{figure}

\begin{figure}[h]
    \centering
    \includegraphics[width=0.98\textwidth]{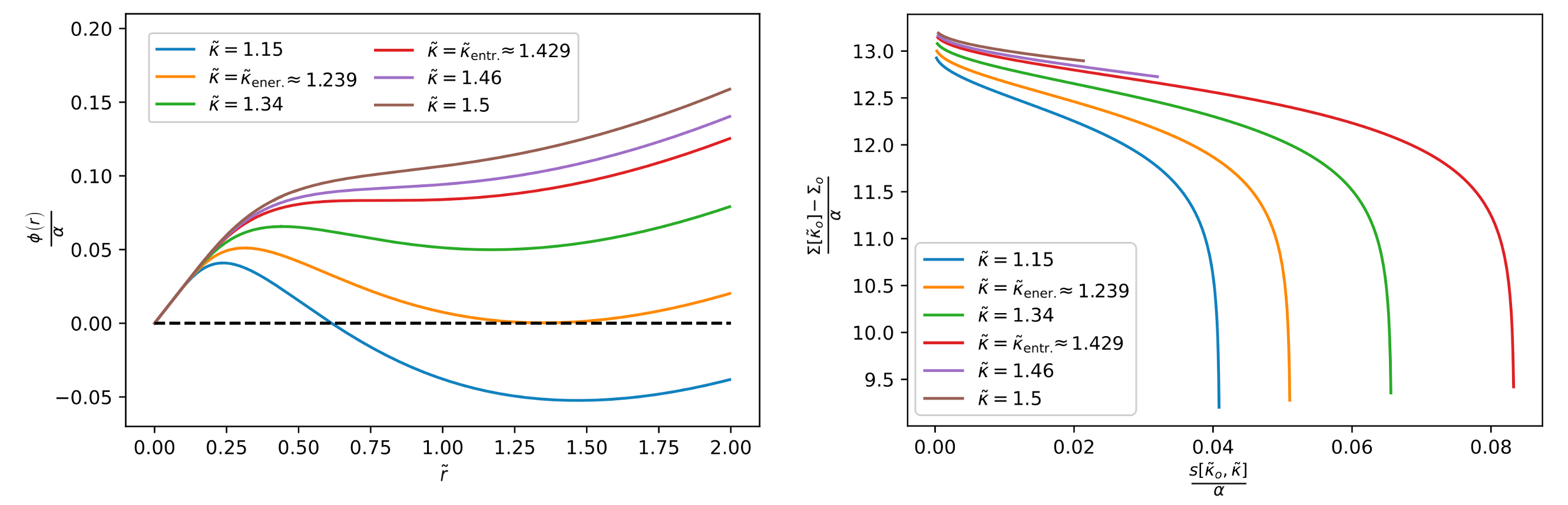}
     \caption{On the left panel we plot the local entropy $\phi_1(r) = h(1-2r) + \alpha \varphi_1(1-2r)$ in the small $\alpha$ limit as a function of the rescaled distance $\tilde{r}=-4r\log(\alpha)/\alpha$; see Section~\ref{sec:maindef}. 
     Each curve corresponds to a different value of $\knew$ (with $\tknew = \knew/\sqrt{-\alpha/\log(\alpha)}\,$), where we have set  $\kold=\kappa_{{\rm SAT}}(\alpha)$ and thus $\kold/\knew{\rightarrow}0$ with $\alpha \rightarrow 0$.  
     First, we highlight the value $\tkenerg$ above which the entropy remains positive for any distance $\tilde{r}>0$. Then, we have a second critical value for $\tknew$ that we label $\tkentro$. It corresponds to the minimum value of $\tknew$ for which the local entropy has a local maximum with $\tilde{r}\neq 0$. On the right panel we plot the complexity $\Sigma[\tkold]$ as a function of the local entropy $s[\tkold,\tknew]$ (with $\tkold=\kold/\sqrt{-\alpha/\log(\alpha)}\,$).  
 In this context, the complexity corresponds to the exponential number of possible planted configurations at $\kold$. The entropy corresponds to the local entropy $\phi_1(r)$ evaluated at its local maxima in $\tilde{r}$ for fixed $\tkold$ and $\tknew$. To obtain a $O(1)$ scaling of the complexity in the low $\alpha$ limit we subtracted $\Sigma_o=\log(2)+\alpha\log(-\alpha/\log(\alpha))$.  Finally, for {  $\tknew>\tkentro$} a span of value for $\tkold$ yields no local maxima for the local entropy. This explains the sudden stop in the right-hand tail of the purple and brown curves (for which  {  $\tknew>\tkentro$}. }
    \label{fig: local entropy and complexity}
\end{figure}

Motivated by these findings, we then study the local entropy of solutions that are at a Hamming distance $Nr$ from the solution planted at $\kold =\kappa_{\sSAT}(\alpha)$. This is akin to the Franz-Parisi potential as studied in the physics of spin glasses~\cite{franz1995recipes}. Here, we compute this potential around a typical solution at $\kold$. Our findings, again in the regime of small $\alpha$, are summarized in Fig.~\ref{fig: local entropy and complexity} (left), where it is apparent that the local entropy as a function of the distance $r$ from a reference solution is monotone when $\kappa \ge \tkentro \sqrt{-\alpha/\log(\alpha)}$ and has a local maximum at an intermediate distance $r_0$ when $\kappa < \tkentro \sqrt{-\alpha/\log(\alpha)}$, with $\tkentro \approx 1.429$ given by implicit equations (\ref{eq: entro transition 1}) and (\ref{eq: entro transition 2}). We also show that no solutions can be found in an interval of distances from the reference solution when $\kappa < \tkenerg \sqrt{-\alpha/\log(\alpha)}$ with $ \tkenerg  \approx 1.239$ 
 given by the implicit equations (\ref{eq: energ transition 1}) and (\ref{eq: energ transition 2}). 

From these results, we note the existence of a logarithmic gap in $1/\alpha$ in the value of $\knew$ where the local entropy curve becomes monotone and the value where the Bansal-Spencer algorithm is proved to succeed, in the regime of small $\alpha$.
It is an interesting open problem to close this gap, either by showing that efficient algorithms can find solutions for all  $\kappa \ge \tkentro \sqrt{-\alpha/\log(\alpha)}$ or by showing the local entropy approach is not indicative of algorithmic hardness.

 \paragraph{The 1RSB computation of the complexity curve:}

We note that in the statistical physics literature, 
clusters as defined above are also associated with a fixed point of the approximate message passing (AMP) algorithm or equivalently the Thouless-Anderson-Palmer (TAP) equations. 
 The cluster entropy can be thus computed as the Bethe entropy corresponding to the AMP/TAP fixed point that is reached by AMP run at $\knew$ and initialized in one of the typical solutions at margin $\kold$. 
For $\knew > \knew_{\rm entr}(\kold)$ the AMP/TAP converges to the same fixed point as would be reached from a random initialization, corresponding to an entropy covering the whole space of solutions. 
Using this relation, the onset of a region where algorithms may be able to find these solutions is then related to 
the existence of solutions such that a AMP/TAP iteration initialized at these points converges to the same fixed point as if the iteration was initialized uniformly at random from $S(\bG,\kappa)$. Indeed, it was observed empirically that solutions found by efficient algorithms always have such a property of AMP/TAP or the belief propagation algorithm converging to the same fixed point as from a random initialization \cite{maneva2007new,braunstein2004survey}.

In the existing statistical physics literature, using the replica method on the one-step replica symmetry breaking level, researchers so far have not found clusters of solutions of extensive entropy in the binary perceptron.  
 This is a point of concern as this method is supposed to count all clusters of solutions corresponding to the TAP/AMP fixed points, including the rare non-equilibrium ones~\cite{monasson1995structural,mezard2003cavity,krzakala2007gibbs,zdeborova2007phase,Semerjian}. 
 This a priori casts doubt on the efficacy of the replica method and the validity of its predictions for the number of clusters of a given size, since the method misses a large part of the phase space (unless some explicit conditioning is done as in \cite{baldassi2015subdominant,baldassi2016local}.)

 We propose, based on the replica method, that the answer to this question lies in the properties of the complexity (the logarithm of the number of clusters) versus entropy (the logarithm of the number of solutions in the clusters) $\Sigma(s)$. We observe that the numerical value of the complexity is rather large compared to the entropy. The slope of $\Sigma(s)$ gives the value of the so-called Parisi parameter $x$ that is therefore rather large: $x \gg 1$. Since the value of $x$ describing the equilibrium properties of the system is always between $0$ and $1$ it is not that surprising that the literature has not investigated solutions of the replica equations corresponding to $x \gg 1$. When we consider a large range of values of $x$ in the standard 1RSB equations for SBP \cite{aubin2019storage}, we obtain the  $\Sigma(s)$ depicted in Fig.~\ref{fig: local entropy and complexity} right. We then provide an argument that leads us to conjecture that in the small $\alpha$ limit, the curve $\Sigma(s)$ corresponds to the one we obtain via the approach of planting at $\kold$. Thus even though, in general, by planting we construct only some of the rare clusters, it seems that in fact we construct the most frequent ones in the limit of small $\alpha$. 

Another property that we unveil is related to the fact that the curve $\Sigma(s)$ is usually expected to be concave.
The non-concave parts were so far considered "unphysical" in the literature (e.g. Fig.\ 8 in \cite{braunstein2003polynomial} or Fig.\ 5 in \cite{zdeborova2007phase}). We show in our present work that the so-called "unphysical branch" of the replica/cavity prediction is actually not  "unphysical" in the SBP and that it reproduces the curve $\Sigma(s)$ obtained from the local entropy calculation at small $\alpha$ and small internal cluster entropy. Moreover, we show that some of the relevant parts of the curve $\Sigma(s)$ cannot be obtained in the usually iterative way of solving the 1RSB equations at a fixed value of 
the Parisi parameter $x$. To access this part of the curve we need to adjust the value of $x$ adaptively in every step when solving the 1RSB fixed point equations iteratively.

\subsection{Organization of the paper and the level of rigour}

The rest of the paper is organized as follows: Section \ref{sec:maindef} defines the local entropy and states the main Theorem \ref{thm:main1} in the small $\alpha$ limit. Section \ref{sec:planted} introduces the planted model and its contiguity to the original model; a key element of the proof. Section \ref{sec:moments_in_planted} contains the moment computations in the planted model, ending with the proof of Theorem \ref{thm:main1}. In Section \ref{sec: local entropy analysis} we use the result of Theorem \ref{thm:main1} and study the properties of the asymptotic formula of the local entropy in the small $\alpha$ limit. In section \ref{sec:1RSB} we study the one-step-replica-symmetry breaking solution of the SBP and its relation to the local entropy. This section investigates general values of $\alpha$, not only the small $\alpha$ limit. Finally, we conclude in section \ref{sec:conclusion}.

Sections \ref{sec:maindef} to \ref{sec:moments_in_planted} are fully mathematically rigorous. In Section \ref{sec: local entropy analysis} we analyze the resulting local entropy formula heuristically, solving the corresponding fixed point equations numerically, and deriving the numerical values for the energetic and the entropic thresholds. In Section \ref{sec:1RSB} we rely on the replica method which is well-accepted and widely used in theoretical statistical physics but not rigorously justified from the mathematical standpoint.   

\section{Definitions and main theorem}
\label{sec:maindef}
In this paper, the local entropy is defined around a solution satisfying the SBP inequalities~\eqref{eq:perceptron} with a \emph{stricter margin} $\kold$. More precisely, for $\kold \le \knew$, let $\bx_0 \in S(\bG, \kold)$, and
 let $Z(\bx_0 , \knew , r)$ be the set of solutions $\by\in S(\bG, \knew)$ which are at Hamming distance $N r$ form $\bx_0$:
\begin{equation}\label{eq:local_Z}
Z(\bx_0 , \knew , r) := \Big|\Big\{\by\in S(\bG, \knew) ~:~ d_H(\bx_0,\by) = Nr \Big\}\Big|\, .
\end{equation}
We then define the local entropy function as the (truncated) logarithm of $Z$ averaged over the choice of $\bx_0$ and the disorder $\bG$:
\begin{equation}\label{eq:local_entropy}
\phi_{N,\delta}(r) := \frac{1}{N} \E_{\bG}\left[ \,  \frac{1}{\big|S(\bG, \kold)\big|} \sum_{\bx_0 \in S(\bG, \kold)} \log_{N\delta} Z(\bx_0, \knew, r) \,\,\Big|\,\,S(\bG, \kold) \neq \emptyset \right]\, , 
\end{equation} 
where $\log_{N\delta}(x) = \max\{\log (x),N \delta\}$, $\delta>0$. This truncation to the logarithm is technically convenient, following~\cite{talagrand2011mean1,nakajima2023sharp}.     
Note that for $\kold= \knew$, the fact that there are no solutions at a distance less than $r_0N$ around $\bx_0$ for some $r_0 = r_0(\knew,\alpha)$ with high probability~\cite{perkins2021frozen} implies that $\phi_{N,\delta}(r) = \delta + o_N(1)$ for all $r < r_0$, and so $\lim_{\delta \to 0} \lim_{N\to \infty} \phi_{N,\delta}(r) = 0$ for $r<r_0$. However, as we increase $\knew$ starting from $\kold$, new nearby solutions are expected to emerge. These are the solutions which are counted by $\phi_{N,\delta}(r)$. This, of course, does not contradict the frozen-1RSB property of $S(\bG,\knew)$ since $\bx_0$ is not typical in $S(\bG,\knew)$.    

{  We show that under a certain concentration condition, Assumption~\ref{assump:concentration} stated in Section~\ref{sec:planted}}, the local entropy $\phi_{N,\delta}(r)$ is given in the limit $N \to \infty$ followed by $\alpha \to 0$ then $\delta \to 0$ by a simple formula which corresponds to the first moment bound (i.e., annealed entropy) in the corresponding planted model of the SBP.  We define binary entropy function
\begin{equation}
    h(m) = -\Big(\frac{1+m}{2}\Big) \log \Big(\frac{1+m}{2}\Big) - \Big(\frac{1-m}{2}\Big) \log \Big(\frac{1-m}{2}\Big)\,,~~~~ m \in (-1,1)\, ,
\end{equation}
and  
\begin{equation}\label{eq:phi1}
\varphi_1(m) = \E \log\P\Big( \big| m Z_0 + \sqrt{1-m^2}Z \big| \le \knew  \, \big| \, Z_0 \Big) \,,~~~~ m \in (-1,1) \, ,
\end{equation}
{ where the outer expectation is taken with respect to $Z_0 \sim N(0,1)$ conditioned on the event $|Z_0| \le \kold$,}  and $Z \sim N(0,1)$ independently of $Z_0$. ($Z_0$ has p.d.f.\ $f$; Eq.~\eqref{eq:f_cond}.) 

\begin{theorem}\label{thm:main1}
For $m \in (-1,1)$, $r = (1-m)/2$ and any sequence $\knew = \knew(\alpha) \to 0$ as $\alpha \to 0$ such that $\alpha \log(1/\knew) \to 0$, under Assumption~\ref{assump:concentration} we have 
\begin{align} \label{eq:limit}
 \lim_{\delta \to 0}\,\limsup_{\alpha \to 0}\, \limsup_{N \to \infty} \Big|\phi_{N,\delta}(r) - \max\big\{h(m)  + \alpha \varphi_1(m) \, , \, \delta\big\}\Big|  = 0 \, .
\end{align}
\end{theorem}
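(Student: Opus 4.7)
The plan is to work in a planted model contiguous to the original SBP (as set up in Section~\ref{sec:planted}), in which a reference configuration $\bx_0$ is drawn uniformly at random from $\{-1,+1\}^N$ and then the disorder $\bG$ is drawn conditionally on $|\langle \bg_a,\bx_0\rangle|\le \kold\sqrt{N}$ for $a=1,\ldots,M$. In this model, the conditional law of each $\bg_a$ given $\bx_0$ is a one-dimensional truncated Gaussian in the direction $\bx_0/\sqrt{N}$ plus an independent standard Gaussian on the orthogonal subspace, so exact one-row integrals are tractable. By the contiguity arguments of Section~\ref{sec:planted}, it suffices to prove the analog of \eqref{eq:limit} for the local entropy averaged with respect to the planted law; Assumption~\ref{assump:concentration} can then be invoked to turn annealed-type estimates on $Z(\bx_0,\knew,r)$ into high-probability bounds on $\frac{1}{N}\log_{N\delta} Z(\bx_0,\knew,r)$.

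First, I would carry out a first-moment computation for $Z(\bx_0,\knew,r)$ under the planted law. For any $\by$ at Hamming distance $Nr$ from $\bx_0$, the overlap with $\bx_0/\sqrt{N}$ is $m\sqrt{N}$ with $m=1-2r$, and conditional on $\bx_0$ the $M$ scalars $\langle \bg_a,\by\rangle/\sqrt{N}$ are i.i.d.\ distributed as $mZ_0+\sqrt{1-m^2}\,Z$ with $Z_0$ the truncated standard normal on $[-\kold,\kold]$ and $Z\sim N(0,1)$ independent. Summing over the $\binom{N}{Nr}$ choices of $\by$ and applying Stirling gives
\begin{equation}
\E\big[Z(\bx_0,\knew,r)\big]=\exp\!\Big(N\big(h(m)+\alpha\varphi_1(m)\big)+o(N)\Big),
\end{equation}
which yields the upper bound on $\phi_{N,\delta}(r)$ through the truncated logarithm (using $\log_{N\delta}\le\max\{\delta,\log\}$ plus a standard Markov argument for the untruncated contribution).

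Next, I would obtain the matching lower bound via a second-moment/Paley--Zygmund argument. For two configurations $\by,\by'$ both at distance $Nr$ from $\bx_0$, let $m'\in[-1,1]$ be their mutual overlap. A Gaussian calculation gives
\begin{equation}
\E\big[Z(\bx_0,\knew,r)^2\big]=\sum_{m'}e^{Nh_2(m,m')}\,e^{M\,\psi(m,m')+o(N)},
\end{equation}
where $h_2(m,m')$ is the combinatorial entropy of pairs at overlap $m'$ subject to both being at distance $Nr$ from $\bx_0$, and $\psi(m,m')=\E\log\P(|\bullet|\le\knew,|\bullet'|\le\knew\mid Z_0)$ is the corresponding joint constraint exponent. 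At $m'=m^2$ the integrand factorizes and reproduces $\E[Z]^2$. The key step is to show that as $\alpha\to 0$, the variational maximum of $h_2+\alpha\psi-2(h+\alpha\varphi_1)$ over $m'$ is $o_\alpha(1)$: at $\alpha=0$ it is maximized uniquely at $m'=m^2$ with value $0$, and under the scaling $\knew=\knew(\alpha)\to 0$ with $\alpha\log(1/\knew)\to 0$ the perturbation $\alpha(\psi-2\varphi_1)$ stays uniformly small on the relevant compact set of $m'$. Combining these first and second moment estimates with Paley--Zygmund yields $Z(\bx_0,\knew,r)\ge\frac{1}{2}\E[Z]\,e^{-N\epsilon(\alpha)}$ with probability $\ge e^{-N\epsilon(\alpha)}$ and $\epsilon(\alpha)\to 0$. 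Boosting this from ``probability $\ge e^{-N\epsilon(\alpha)}$'' to a statement about the average $\E\log_{N\delta} Z$ is exactly what Assumption~\ref{assump:concentration} provides, since it lets us replace $\log_{N\delta} Z$ by its median/expectation up to a vanishing error.

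Finally I would translate back from the planted to the original model using the contiguity from Section~\ref{sec:planted}, so that the averaged log-counting function $\phi_{N,\delta}(r)$ of the original SBP equals its planted counterpart up to $o(1)$, and take the iterated limits $N\to\infty$, $\alpha\to 0$, $\delta\to 0$ in that order. The main obstacle is the second-moment step: controlling the entire overlap profile $m'\in[-1,1]$ uniformly, showing that the non-planted overlap values are strictly subdominant in the $\alpha\to 0$ regime (with the prescribed scaling of $\knew$), and identifying a clean form of Assumption~\ref{assump:concentration} strong enough to absorb the exponential gap $\E[Z^2]/\E[Z]^2=e^{o_\alpha(1)\cdot N}$ present at any fixed $\alpha>0$.
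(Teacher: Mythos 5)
Your plan follows essentially the same route as the paper: pass to the planted model, compute first and second conditional moments, show $\frac{1}{N}\log\left(\E[Z^2]/\E[Z]^2\right)=o_\alpha(1)$ under the scaling $\alpha\log(1/\knew)\to 0$, apply Paley--Zygmund, and use Assumption~\ref{assump:concentration} to upgrade the exponentially-small-probability lower bound to a bound on $\E\log_{N\delta}Z$. The one step you flag as the ``main obstacle'' --- uniform control of the overlap profile in the second moment --- is exactly where the paper's only nontrivial input lies: it bounds $\phi_2(m)-2\phi_1(m)\le -\alpha\varphi_1(m)$ by combining sub-additivity of the entropy with the two-sided bound $2\varphi_1(m)\le\varphi_2(m,q)\le\varphi_1(m)$, whose lower half is the Gaussian correlation inequality and whose upper half is Cauchy--Schwarz, and then shows $\varphi_1$ is non-decreasing so that $-\varphi_1(m)\le\log(1/p(\knew))$; this last monotonicity step requires $\knew^2\ge\kold^2/(1-\kold^2)$, i.e.\ $\kold\ll\knew$, a condition your sketch omits. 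Also, the paper does not invoke full contiguity but only the weaker Perkins--Xu statement~\eqref{eq:small_fluctutations} that $\frac{1}{N}\log\left(|S(\bG,\kold)|/\E|S(\bG,\kold)|\right)\to 0$, which suffices to transfer events of probability $e^{-cN}$ under $\P_{\pl}$ to $o_N(1)$ under $\P_{\rd}$.
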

\begin{remark} 
Observe that for $\alpha$ small, we have $\knew_{\sSAT}(\alpha) = \Theta(2^{-1/\alpha})$, therefore the condition on $\kappa$ and $\alpha$ in the theorem can be interpreted as $\knew \gg \knew_{\sSAT}(\alpha)$.
\end{remark}
The proof of Theorem~\ref{thm:main1} can be found in Section~\ref{sec:moments_in_planted}.

\section{The planted model and contiguity}
\label{sec:planted}
The analysis of the local entropy is achieved via a planted model where $\bx_0$ is drawn uniformly at random from the hypercube $\{-1,+1\}^N$ and then the constraint vectors $\bg_a$ are drawn from the Gaussian distribution conditional on $\bx_0$ being a satisfying configuration, i.e., conditional on $\bx_0 \in S(\bG,\kold)$.    

More precisely, we fix the reference (planted) vector $\bx_0\in \{-1,+1\}^N$ and for each $a \in \{1,\cdots,M\}$ we independently draw Gaussian random vectors $\bg_a$ conditioned on the event that  
\begin{equation}\label{eq:planted0}
\big| \langle \bg_a , \bx_0 \rangle \big| \le \kold \sqrt{N} \, .
\end{equation} 
Equivalently, we can write 
\begin{equation}\label{eq:planted}
\bg_a = \frac{1}{\sqrt{N}} w_a\, \bx_0 + \Big(\bI - \frac{1}{N}\bx_0 \bx_0^\top\Big)\, \tilde{\bg}_a \, ,
\end{equation}
where $(\tilde{\bg}_a)_{a=1}^M$ are independent $N(0, \bI_N)$ random vectors and $\bw = (w_a)_{a=1}^M$ has mutually independent coordinates, independent of $(\tilde{\bg}_a)_{a=1}^M$, and distributed as $N(0,1)$ r.v.'s conditioned to be smaller than $\kold$ in absolute value, i.e., they have a p.d.f.\   
\begin{equation}\label{eq:f_cond}
f(w) : = \Big(\frac{1}{\sqrt{2\pi}} e^{-w^2/2} \,  \one\{|w| \le \kold\}\Big) \big/ \P(|Z| \le \kold)\, .
\end{equation}

We let $\P_{\pl}$ be the distribution of the pair $(\bG,\bx_0)$ as per the description above, Eq.~\eqref{eq:planted0}, and $\P_{\rd}$ be their distribution according to the original model where $\bG \in \R^{M \times N}$ is an array of standard Gaussian vectors and $\bx_0$ is drawn uniformly at random from $S(\bG,\knew)$, conditional on the latter being non-empty. We denote by $\E_{\pl}$ and $\E_{\rd}$ the associated expectations.
A simple computation reveals that the ratio of $\P_{\pl}$ to $\P_{\rd}$ is given by   
\begin{equation}
\frac{\rmd \P_{\pl}}{\rmd \P_{\rd}}(\bG, \bx_0) = \frac{\big|S(\bG,\kold)\big|}{\E\big|S(\bG,\kold)\big|} \one\big\{\bx_0 \in S(\bG,\kold)\big\}\,,~~~~ \forall\, \bG \in \R^{M \times N} \, ,\, \bx_0 \in \{-1,+1\}^N \, .
\end{equation}

It was shown in~\cite{abbe2022binary} in the case of binary disorder  where $\bG$ has independent Rademacher entries that the above likelihood ratio has constant order log-normal fluctuations for all $\kold > \kappa_{\sSAT}(\alpha)$; see similar result for Gaussian disorder in~\cite{sah2023distribution} for $\kappa_0$ close to $\kappa_{\sSAT}(\kappa)$.; Tthis implies in particular that $\P_{\rd}$ and $\P_{\pl}$ are mutually contiguous, meaning that for any sequence of events $E_n$ (in the common probability space of $\P_{\rd}$ and $\P_{\pl}$), $\P_{\rd}(E_n) \to 0$ if and only if $\P_{\pl}(E_n) \to 0$, see for instance~\cite[Lemma 6.4]{van1998asymptotic}. In other words, any high-probability event under the planted distribution $\P_{\pl}$ is also a high-probability event under the original distribution $\P_{\rd}$. 
Contiguity allow to compute the local entropy in the planted model, where $\bx_0$ is uniformly distributed over $\{-1,+1\}^N$ instead of $S(\bG,\kold)$, and then transfer the result of this computation to the original model.
{ In our case a result slightly weaker than contiguity is sufficient:  Perkins and Xu~\cite{perkins2021frozen} showed that under a certain numerical assumption (see Assumption 1 therein), 
\begin{equation}\label{eq:small_fluctutations}
\lim_{N \to \infty} \frac{1}{N}\log \Big(\big|S(\bG, \kappa)\big| \big/ \E\big|S(\bG,\kappa)\big|\Big) = 0 \, ,
\end{equation}
in $\P_{\rd}$-probability for all $\kappa > \kappa_{\sSAT}(\alpha)$. As observed in~\cite{achlioptas2008algorithmic,perkins2021frozen} this implies the weaker statement that events of probability $e^{-c n}$, $c>0$ under $\P_{\pl}$ are of probability $o_N(1)$ under $\P_{\rd}$. This turns out to be sufficient for our purposes. This argument is used to prove Lemma~\ref{lem:closeness} below.}    

In addition to the above, we require a concentration property of the restricted partition function $Z(\bx_0,\knew,r)$ with respect to the disorder $\bG$, which we state in more general form as follows: 
 Let $a_j<b_j$, $1 \le j \le M$ be two sequences of real numbers, let $m \in [-1,1]$ and consider the partition function
\begin{equation}
    Z_{N} = \Big|\Big\{ \bx \in \{-1,+1\}^N \, :\, \sum_{i=1}^N x_i = Nm \,,~\langle \bg_j , \bx - m \mathbf{1} \rangle / \sqrt{N} \in [a_j,b_j]~~~\forall 1\le j \le M \Big\}\Big| \, ,
\end{equation} 
where $\bg_j$ are i.i.d.\ standard Gaussian random vectors in $\R^N$.  
\begin{assumption}\label{assump:concentration}
    For any $\delta>0$, $m \in [-1,1]$ and sequences $(a_j), (b_j)$ as above, there exist a constant $C>0$ depending only on $\delta$ and $\Delta := max_j (b_j-a_j)$ such that for all $t>0$,
    \begin{equation}\label{eq:conc}
        \P\Big( \Big|\log_{N\delta} Z_{N} - \E\log_{N\delta} Z_{N}\Big| \ge CN  t\Big) \le \exp\Big(- N \min\{t^2,t\}\Big) \, .
    \end{equation}
\end{assumption}

In models of disordered systems where the free energy is a smooth function of the Gaussian disorder, this concentration follows from general principles of Gaussian concentration of Lipschitz functions, see e.g.~\cite{boucheron2013concentration}. In particular, a stronger version of the above assumption (with no truncation to the logarithm and where the decay on the right-hand side is sub-Gaussian for all $t>0$) holds for the SK and $p$-spin models at any positive temperature, and for the family of \emph{$U$-perceptrons} where the activation function $U$ is positive and differentiable with bounded derivative.  
However, in our case the hard constraints defining the model make concentration far less obvious. Currently, exponential concentration of the truncated log-partition function is known for the half-space model i.e., the one-sided perceptron~\cite{talagrand2011mean1}, and for the more general family of $U$-perceptrons which includes the SBP model under study here, albeit with a non-optimal exponent in $N$ on the right-hand side of Eq.~\eqref{eq:conc}, and with an additional slowly vanishing term on the right-hand side; see~\cite[Proposition 4.5]{nakajima2023sharp}. (The latter paper also studies concentration and the sharp-threshold phenomenon for more general disorder distributions.) For our purposes, an essential feature is exponential decay in $N\theta(t)$ where $\theta: \R_{+} \to \R_{+}$ is any increasing function with $\theta(0)=0$. We assume $\theta(t) = \min\{t^2,t\}$ in the above since this is the sub-exponential tail which is expected, but this is not crucial to the proof.    
Establishing the above assumption is an interesting mathematical problem on its own and goes beyond the scope of this paper.

In the planted model, the local entropy takes the simplified form 
\begin{equation}\label{eq:local_entropy_pl}
\phi^{\pl}_{N,\delta}(r) := \frac{1}{N} \E_{\pl}\left[ \, \log_{N\delta} Z(\bx_0, \knew, r) \right]\, ,
\end{equation} 
where the expectation is with respect to $\bx_0$ taken uniformly in $\{-1,+1\}^N$ and the conditional distribution  $\bG | \bx_0 $ is given by Eq.~\eqref{eq:planted}. 
We now show that under Assumption~\ref{assump:concentration}, $\phi_{N,\delta}(r)$ and $\phi^{\pl}_{N,\delta}(r)$ are close:
\begin{lemma}\label{lem:closeness}
Under Assumption~\ref{assump:concentration} we have for all $r \in (0,1)$,
\begin{equation}
\lim_{N\to \infty} \Big|\phi_{N,\delta}(r)-\phi^{\pl}_{N,\delta}(r)\Big| = 0 \, .
\end{equation}
\end{lemma}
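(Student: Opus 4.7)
The plan is to prove Lemma~\ref{lem:closeness} by first showing that $F_N(\bG,\bx_0):=N^{-1}\log_{N\delta}Z(\bx_0,\knew,r)$ concentrates exponentially around a deterministic constant under the planted measure $\P_{\pl}$, and then using the weak-contiguity consequence of the Perkins--Xu identity~\eqref{eq:small_fluctutations}---namely that events of $\P_{\pl}$-probability $e^{-cN}$ have $\P_{\rd}$-probability $o_N(1)$---to transfer this concentration to $\P_{\rd}$. Since $F_N$ takes values in the bounded interval $[\delta,\log 2]$, convergence in $\P_{\rd}$-probability to a constant upgrades to convergence of expectations, and the lemma then follows from $\phi^{\pl}_{N,\delta}(r)=\E_{\pl}[F_N]$ and $\phi_{N,\delta}(r)=\E_{\rd}[F_N]$ up to the vanishing $\P_{\rd}(S(\bG,\kold)=\emptyset)$ factor.

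For the concentration under $\P_{\pl}$, by the permutation symmetry of the planted model one may fix $\bx_0=\one$ without loss of generality. The decomposition~\eqref{eq:planted} combined with the change of variable $\bx=\bx_0\circ\by$ recasts the constraints $|\langle\bg_a,\by\rangle|\le\knew\sqrt{N}$ as $\langle\tilde{\bg}_a,\bx-m\one\rangle/\sqrt{N}\in[-\knew-w_a m,\knew-w_a m]$ with $m=1-2r$, so $Z(\one,\knew,r)$ is precisely of the form treated by Assumption~\ref{assump:concentration}, with i.i.d.\ standard Gaussians $(\tilde{\bg}_a)$, common interval width $\Delta=2\knew$, and shift parameters $(w_a)$ independent of $\tilde\bG$. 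Applying the assumption conditional on $\bw$ yields $\P_{\tilde\bG\mid\bw}\bigl(|F_N-h_N(\bw)|\ge Ct\bigr)\le\exp\bigl(-N\min\{t^2,t\}\bigr)$ uniformly in $\bw$, where $h_N(\bw):=\E_{\tilde\bG\mid\bw}[F_N]$. It remains to upgrade this to concentration around the deterministic constant $A_N:=\E_{\bw}[h_N(\bw)]=\E_{\pl}[F_N]$; I plan to do so via McDiarmid's bounded-differences inequality applied to the i.i.d.\ bounded coordinates $(w_a)_{a=1}^M$. A single swap $w_j\to w_j'$ shifts only the $j$-th interval by $O(\kold)$, and the Gaussian density of $\langle\tilde{\bg}_j,\bx-m\one\rangle/\sqrt{N}\sim\mathcal{N}(0,1-m^2)$ should make the resulting change in $\E_{\tilde\bG}[\log_{N\delta}Z]$ of order $O(1)$, giving per-coordinate constants $c_j=O(1/N)$ and hence $\P_{\bw}\bigl(|h_N-A_N|\ge\epsilon\bigr)\le\exp(-c\epsilon^2 N/\alpha)$. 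Combining the two bounds produces $\P_{\pl}\bigl(|F_N-A_N|\ge\epsilon\bigr)\le e^{-c(\epsilon)N}$.

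The principal technical obstacle is the bounded-differences step: Assumption~\ref{assump:concentration} controls fluctuations in $\tilde\bG$ for fixed interval endpoints only, so a modulus-of-continuity estimate for $\E_{\tilde\bG}[\log_{N\delta}Z]$ in the endpoint shift parameter $w_j$ must be proved separately. I expect this to follow from a direct computation leveraging the boundedness of the Gaussian density of $\langle\tilde{\bg}_j,\bx-m\one\rangle/\sqrt{N}$ and the truncation floor $\log_{N\delta}Z\ge N\delta$ (which prevents pathological sensitivity when $Z$ happens to be small), but the quantitative details are the technical heart of the proof. Granting this estimate, the remainder is routine: weak contiguity yields $\P_{\rd}(|F_N-A_N|\ge\epsilon)=o_N(1)$, and the uniform boundedness of $F_N$ promotes this to $\E_{\rd}[F_N]=A_N+o_N(1)$, i.e.\ $|\phi_{N,\delta}(r)-\phi^{\pl}_{N,\delta}(r)|\to 0$.
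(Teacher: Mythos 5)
Your architecture is the same as the paper's: establish exponential concentration of $F_N=N^{-1}\log_{N\delta}Z(\bx_0,\knew,r)$ under $\P_{\pl}$, transfer the complement event to $\P_{\rd}$ via the change-of-measure bound $\P_{\rd}(A^c)\le e^{\varepsilon N}\P_{\pl}(A^c)+\P_{\rd}\bigl(\E|S(\bG,\kold)|/|S(\bG,\kold)|\ge e^{\varepsilon N}\bigr)$ together with~\eqref{eq:small_fluctutations}, and then use $0\le F_N\le \log 2$ to upgrade convergence in $\P_{\rd}$-probability to convergence of the expectations. The one place where you diverge is the first step. The paper fixes $\bx_0=\one$ and invokes Assumption~\ref{assump:concentration} directly to conclude $\P_{\pl}(|F_N-\E_{\pl}F_N|\ge t)\le e^{-cN}$, i.e.\ it reads the assumption as giving concentration around the \emph{unconditional} planted mean even though the interval endpoints $[-\knew-w_am,\knew-w_am]$ are random through $\bw$. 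You correctly notice that the assumption, as literally stated, only controls the fluctuations of $F_N$ around $h_N(\bw)=\E[F_N\mid\bw]$, and you propose to bridge the remaining gap $|h_N(\bw)-\E_{\pl}F_N|$ by McDiarmid over the i.i.d.\ bounded coordinates $(w_a)$.

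That extra step is where your proposal is incomplete. McDiarmid with $M=\alpha N$ coordinates requires per-coordinate oscillations $c_j=O(1/N)$ for the normalized quantity, i.e.\ that resampling a single $w_j$ (equivalently, shifting one constraint interval by $O(\kold)$) changes $\E_{\tilde\bG}[\log_{N\delta}Z]$ by $O(1)$. For a model with hard constraints this is not a routine continuity estimate: a single constraint can in principle remove an exponential fraction of the surviving configurations, the trivial bound on the oscillation is $N(\log 2-\delta)$, and the truncation floor $N\delta$ does not by itself prevent an $\Omega(N)$ change in $\log_{N\delta}Z$ between the floor and $N\log2$. A statement of this kind is of essentially the same nature and difficulty as Assumption~\ref{assump:concentration} itself, and is neither implied by it nor proved in your sketch; you flag it honestly, but as written it is a genuine gap. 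The paper avoids the issue only by applying the assumption with the random endpoints included (arguably it should have been stated that way, with the $a_j,b_j$ allowed to be i.i.d.\ bounded random shifts); if you prefer your cleaner conditional reading of the assumption, the cheapest fix is to strengthen the assumption accordingly rather than to prove the sensitivity bound from scratch. Two minor points: your identification $\Delta=2\knew$ for the interval width is the correct one (the paper's $\Delta=(1-2r)\knew$ appears to be a slip), and note that the exponential rate from the $\bw$-concentration must survive the $e^{\varepsilon N}$ factor in the transfer step, which your claimed $\exp(-c\epsilon^2N/\alpha)$ bound would indeed do.
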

\begin{proof}
We define the random variable $X = (1/N) \log_{N\delta} Z(\bx_0, \knew, r)$. We have $ \E_{\pl}[X] = \phi^{\pl}_{N,\delta}(r)$ and $\E_{\rd}[X] = \phi_{N,\delta}(r)$.  

Now for $t>0$ fixed, we consider the event $A = \big\{\big|X -\E_{\pl}[X] \big| \le t\big\}$.
Under the planted model $\P_{\pl}$ we may assume that $\bx_0 = \one$ by symmetry of the Gaussian distribution. {  Therefore by Assumption~\ref{assump:concentration} (with $\Delta = (1-2r)\knew$) we have $\P_{\pl}(A^c) \le e^{-c N}$, $c = c(t)>0$.  We show that this combined with~\eqref{eq:small_fluctutations} implies $\P_{\rd}(A^c) = o_N(1)$. Indeed for any $\varepsilon>0$,
\begin{align}
\P_{\rd}(A^c) = \E_{\pl}\left[\frac{\E|S(\bG,\kold)|}{|S(\bG,\kold)|} \one_{ A^c}\right] &\le e^{\varepsilon N} \P_{\pl}(A^c) + \P_{\rd}\left(\frac{\E|S(\bG,\kold)|}{|S(\bG,\kold)|} \ge e^{\varepsilon N}\right)\\
&\le e^{(\varepsilon - c)N} + o_N(1)\, ,
\end{align}
where the $o_N(1)$ bound on the second term follows from~\eqref{eq:small_fluctutations}. Taking $\varepsilon = c/2$ shows that $\P_{\rd}(A^c) = o_N(1)$.}
Further, observe that $0 \le X \le \log 2$, $\P_{\rd}$-almost surely. Therefore we have
\begin{equation}\label{eq:diff}
\big|\E_{\rd}[X]-\E_{\pl}[X]\big| \le 
\E_{\rd}\Big[\big|X-\E_{\pl}[X]\big|\Big] 
\le t \P_{\rd}(A) + (2\log 2)\P_{\rd}(A^c) 
\le t+ o_N(1)\, .
\end{equation}
The claim follows by letting $t \to 0$ after $N \to \infty$. 
\end{proof}

 \section{Moment estimates in the planted model}
 \label{sec:moments_in_planted}

Now we aim to calculate the limit of $\phi^{\pl}_N(r)$ as $N\to \infty$ for small $\alpha$. To this end we evaluate the first two moments of $Z(\bx_0, \knew, r)$ and show that the second moment is only larger than the square of the first moment by an exponential factor which shrinks as $\alpha \to 0$. 
Then we show that $\phi^{\pl}_N(r)$ is close to its annealed approximation using Assumption~\ref{assump:concentration}. 

We first need to define two auxiliary functions. For a jointly distributed pair of discrete random variables $(\theta_1,\theta_2)$ let $h(\theta_1,\theta_2)$ be their Shannon entropy. 
For $m,q \in (-1,1)$ we define the function 
 \begin{equation}
 \varphi_2(m,q) = \E \log \P\Big( \big|m Z_0 +Z_1  \big| \le \knew \, , \big| m Z_0 +Z_2  \big| \le \knew \, \big| \, Z_0 \Big)  \, ,
  \end{equation}
where $Z_0 \sim f$ and the pair $(Z_1,Z_2)$ is a centered bivariate Gaussian vector independent of $Z_0$ with covariance 
\begin{equation}\label{eq:cov}
\begin{bmatrix} 1-m^2 & q - m^2 \\ q - m^2 & 1 - m^2\end{bmatrix}\, . 
\end{equation}
\begin{theorem}\label{thm:moments}
Let $\bw = (w_a)_{a=1}^M$ as in Eq.~\eqref{eq:planted}. For $m \in (-1,1)$, $r = (1-m)/2$ we have
\begin{align} 
 \frac{1}{N} &\log \E \Big[Z(\bx_0 , \knew , r) \, \big| \, \bw \Big]\,\, \xrightarrow[N \to \infty]{a.s.} \,\, h(m)  + \alpha \varphi_1(m)\, ,\label{eq:firstmoment}\\
 \mbox{and}~~~~~ 
\frac{1}{N} &\log \E \Big[Z(\bx_0 , \knew , r)^2 \, \big| \, \bw \Big]  \,\,\xrightarrow[N \to \infty]{a.s.} \,\, \max_{q \in [-1,1]} \Big\{ \max_{(\theta_1,\theta_2)} h(\theta_1,\theta_2) 
 + \alpha \varphi_2(m,q) \Big\} \, , \label{eq:2ndmoment}
\end{align}
{  where $\varphi_1$ is defined in Eq.~\eqref{eq:phi1}}, and the inner maximization in Eq.~\eqref{eq:2ndmoment} is over the joint distribution of two $\{-1,+1\}$-valued random variables $(\theta_1,\theta_2)$ such that $\E[\theta_1] =\E[\theta_2] = m$ and $\E[\theta_1\theta_2] =q$.
\end{theorem}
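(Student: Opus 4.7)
The plan is to compute both moments directly, conditional on $\bw$, by exploiting the Gaussian structure of the planted constraints after projecting out the direction of $\bx_0$. By the joint symmetry of the $\tilde{\bg}_a$ under sign flips and the uniform measure on $\{-1,+1\}^N$, I can assume $\bx_0 = \one$ throughout. The key conditional law follows from the decomposition in Eq.~\eqref{eq:planted}: for $\by \in \{-1,+1\}^N$ with $\langle \bx_0 , \by\rangle/N = m$,
\[
\frac{\langle \bg_a, \by\rangle}{\sqrt{N}} \;=\; w_a m \,+\, \frac{\langle \tilde{\bg}_a, \by - m \bx_0 \rangle}{\sqrt{N}},
\]
where the second term is, conditionally on $\bw$, a centered Gaussian with variance $\|\by - m \bx_0\|^2/N = 1 - m^2$, independent across $a$. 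For a pair $(\by_1,\by_2)$ with $\langle \by_1,\by_2\rangle/N = q$ the two Gaussians are centered with the covariance matrix~\eqref{eq:cov}, since $\langle \by_1 - m\bx_0, \by_2 - m\bx_0\rangle/N = q - m^2$.

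For the first moment, linearity of expectation and conditional independence across $a$ give
\[
\E\bigl[Z(\bx_0,\knew,r)\,\big|\, \bw\bigr] \;=\; \binom{N}{Nr}\prod_{a=1}^{M}\P\bigl(|w_a m + \sqrt{1-m^2}\,Z_a| \le \knew \,\big|\,w_a\bigr),
\]
with $Z_a \sim N(0,1)$. Stirling's formula gives $N^{-1}\log\binom{N}{Nr}\to h(m)$, and since the $w_a$ are i.i.d.\ with density $f$, the strong law of large numbers yields almost sure convergence of the normalized logarithm of the product to $\alpha\, \varphi_1(m)$. Adding the two pieces establishes~\eqref{eq:firstmoment}.

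For the second moment I would partition pairs $(\by_1,\by_2)$ at distance $Nr$ from $\bx_0$ by their overlap $q=\langle \by_1,\by_2\rangle/N$, which takes $O(N)$ discrete values in a subinterval of $[-1,1]$. The number of pairs with a given $q$ is the multinomial $\binom{N}{N p_{++},N p_{+-},N p_{-+},N p_{--}}$, where the $p_{\epsilon_1\epsilon_2}$ are uniquely determined by $\E[\theta_i]=m$ and $\E[\theta_1\theta_2]=q$, and Stirling yields exponential rate $h(\theta_1,\theta_2)$. For each such pair, the conditional probability factorizes across $a$ as the probability of a bivariate Gaussian with covariance~\eqref{eq:cov} lying in $[-\knew,\knew]^2$, and the strong LLN over the i.i.d.\ sequence $(w_a)$ gives exponent $\alpha\,\varphi_2(m,q)$. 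Summing over the $O(N)$ admissible overlaps and applying Laplace's method produces the supremum over $q$, which is~\eqref{eq:2ndmoment}.

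The main technical hurdle is upgrading the pointwise LLN across $a$ to one that is uniform in $q$, so that Laplace's principle legitimately converts the overlap sum into a supremum. I would handle this via an $\varepsilon$-net argument on $q$, leveraging continuity of $q\mapsto \varphi_2(m,q)$ on the open interval where the covariance in~\eqref{eq:cov} is non-degenerate, together with uniform boundedness of the log-integrands after a mild truncation of $w_a m$ away from the boundary $\pm\knew$. Degeneracies near $q=\pm 1$ and at the endpoints of the admissible overlap range are dealt with by standard truncation and do not contribute to the supremum; these are the places where extra care is required but no fundamentally new ideas are needed.
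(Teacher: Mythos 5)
Your proposal is correct and follows essentially the same route as the paper's proof: the same conditional Gaussian decomposition after projecting out $\bx_0$, Stirling plus the strong law for the first moment, and a partition of pairs by overlap $q$ with multinomial counting and Laplace's method for the second moment. The only cosmetic difference is in handling uniformity over $q$: the paper uses Azuma--Hoeffding concentration with a union bound over the $O(N)$ lattice values of $q$ and Borel--Cantelli, whereas you propose an $\varepsilon$-net with continuity of $\varphi_2$ -- both are standard and equivalent here.
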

The proof of the above theorem relies on a standard use of Stirling's formula, and is postponed to the end of this section. At this point, if the right-hand side of Eq.~\eqref{eq:2ndmoment} is equal to twice the right-hand side of Eq.~\eqref{eq:firstmoment}, a mild concentration argument would allow us to conclude that $\phi^{\pl}_N(r)$ is given by Eq.~\eqref{eq:firstmoment} in the large $N$ limit. This equality would follow if the value $q=m^2$ is a maximizer in Eq.~\eqref{eq:2ndmoment}. This does not appear to be the case for any values of $\alpha,\kold,\knew$. However, we show that the difference is vanishing when $\alpha \to 0$.            
Let
 \begin{align}
 \phi_1(m) &= h(m)  + \alpha \varphi_1(m) \, ,\\
 \phi_2(m) &= \max_{q \in [-1,1]} \Big\{ \max_{(\theta_1,\theta_2)} h(\theta_1,\theta_2)  + \alpha \varphi_2(m,q) \Big\} \, . 
 \end{align}
\begin{lemma}\label{lem:closeness2}
Assume  $\kold<1$ and $\knew^2 \ge \kold^2/(1-\kold^2)$. Then for all $m \in (-1,1)$,
\begin{equation}
0 \le \phi_2(m) - 2\phi_1(m) \le \alpha \log\big(1/p(\knew)\big) \, , 
\end{equation}
where $p(\knew) = \P\big(|Z| \le\knew\big)$, $Z \sim N(0,1)$. In particular the above difference tends to zero wherever $\alpha \to 0, \knew \to 0$ with $\alpha \log(1/\knew) \to 0$, and $\kappa_0 \ll \kappa$. 
 \end{lemma}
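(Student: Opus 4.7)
The plan is to prove the two inequalities separately. For the lower bound $\phi_2(m) \ge 2\phi_1(m)$ I would take the test values $q = m^2$ together with the product distribution on $(\theta_1,\theta_2)$ with marginal mean $m$. For this choice the Shannon entropies add, $h(\theta_1,\theta_2) = 2h(m)$, and the covariance matrix in~\eqref{eq:cov} becomes diagonal, which makes $Z_1$ and $Z_2$ conditionally independent and yields $\varphi_2(m,m^2) = 2\varphi_1(m)$. The resulting value $2h(m) + 2\alpha\varphi_1(m) = 2\phi_1(m)$ is a lower bound on the two outer maxima defining $\phi_2(m)$.

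For the upper bound I would separate the entropic and energetic contributions. Let $p_0(Z_0) := \P(|mZ_0 + \sqrt{1-m^2}\,Z| \le \knew \mid Z_0)$, so that $\varphi_1(m) = \E_{Z_0 \sim f}[\log p_0(Z_0)]$. Subadditivity of the Shannon entropy gives $h(\theta_1,\theta_2) \le 2h(m)$ for any admissible pair with marginal mean $m$. The trivial domination $\P(E_1 \cap E_2 \mid Z_0) \le \P(E_1 \mid Z_0) = p_0(Z_0)$, together with the symmetry $\P(E_1\mid Z_0) = \P(E_2\mid Z_0)$, yields $\varphi_2(m,q) \le \varphi_1(m)$ for every $q$. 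Combining,
\begin{equation*}
\phi_2(m) - 2\phi_1(m) \le -\alpha \varphi_1(m) = \alpha\,\E_{Z_0 \sim f}\bigl[-\log p_0(Z_0)\bigr],
\end{equation*}
so the upper bound reduces to establishing the pointwise inequality $p_0(Z_0) \ge p(\knew)$ for every $|Z_0| \le \kold$.

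This pointwise bound is the main technical obstacle, and it is precisely where the hypothesis $\knew^2(1-\kold^2) \ge \kold^2$ is used. Anderson's inequality (or direct inspection) shows that $p_0$ is symmetric and non-increasing in $|Z_0|$, so the worst case is $Z_0 = \pm\kold$. Writing $p_0(\pm\kold) = \Phi(A) + \Phi(B) - 1$ with $A = (\knew - m\kold)/\sigma$, $B = (\knew + m\kold)/\sigma$, $\sigma = \sqrt{1-m^2}$, and setting $F(m) := p_0(\pm\kold) - p(\knew)$, I have $F(0) = 0$, and a direct computation using the identity $\bigl(\phi(A)-\phi(B)\bigr)/\bigl(\phi(A)+\phi(B)\bigr) = \tanh\!\bigl((B^2-A^2)/4\bigr)$ (where $\phi$ denotes the standard Gaussian density) together with $B^2 - A^2 = 4m\kold\knew/(1-m^2)$ yields
\begin{equation*}
F'(m) = \frac{\phi(A) + \phi(B)}{(1-m^2)^{3/2}}\Bigl[m\knew - \kold\tanh\!\bigl(m\kold\knew/(1-m^2)\bigr)\Bigr].
\end{equation*}
To establish $F'(m) \ge 0$ on $[0,1)$ I would split into two cases: if $m^2 \le 1-\kold^2$, the bound $\tanh(x) \le x$ reduces the desired inequality to $1 - m^2 \ge \kold^2$; if $m^2 > 1-\kold^2$, then $\tanh(\cdot) \le 1$ reduces it to $m\knew \ge \kold$, which holds because $m > \sqrt{1-\kold^2}$ and $\knew \ge \kold/\sqrt{1-\kold^2}$ by hypothesis. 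Hence $F(m) \ge 0$ for all $m \in (-1,1)$, so $p_0(Z_0) \ge p(\knew)$ pointwise.

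Integrating this pointwise bound against $f$ gives $\varphi_1(m) \ge \log p(\knew)$, and therefore $\phi_2(m) - 2\phi_1(m) \le \alpha\log(1/p(\knew))$, completing the upper bound. The final asymptotic claim follows from $p(\knew) \sim \knew\sqrt{2/\pi}$ as $\knew \to 0$, which gives $\log(1/p(\knew)) = (1+o(1))\log(1/\knew)$, so $\alpha\log(1/p(\knew)) \to 0$ whenever $\alpha\log(1/\knew) \to 0$.
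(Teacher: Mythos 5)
Your proof is correct and follows essentially the same route as the paper's: subadditivity of the Shannon entropy plus the bound $\varphi_2(m,q)\le\varphi_1(m)$ reduce the upper bound to showing $\varphi_1(m)\ge\log p(\knew)$, which both you and the paper establish by differentiating in $m$ and using the identical $\tanh$ identity together with the hypothesis $\knew^2\ge\kold^2/(1-\kold^2)$. The only cosmetic differences are that you use the trivial domination $\P(E_1\cap E_2)\le\P(E_1)$ where the paper invokes Cauchy--Schwarz, and you first reduce to the worst case $Z_0=\pm\kold$ via Anderson's inequality whereas the paper runs the monotonicity-in-$m$ argument pointwise for every $|Z_0|\le\kold$.
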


\begin{proof}
We first remark that by sub-additivity of the entropy, 
\begin{equation}\label{eq:subadd}
h(\theta_1,\theta_2) \le 2h(m)\, ,
\end{equation}
with equality  if and only if  the pair $(\theta_1,\theta_2)$ is independent, i.e., if $q = m^2$. Moreover we remark that for all $q \in [-1,1]$, 
\begin{equation}\label{eq:supadd}
2 \varphi_1(m) \le \varphi_2(m,q) \le \varphi_1(m) \, ,
\end{equation}
where the lower bound follows from the Gaussian correlation inequality~\cite{latala2017royen,royen2014simple} (with equality if $(Z_1,Z_2)$ are independent, i.e., $q=m^2$) and the upper bound by Cauchy Schwarz (with equality if $Z_1 = Z_2$, i.e., $q=1$).

Using the bounds~\eqref{eq:subadd} and~\eqref{eq:supadd} we have
\begin{equation}
\phi_2(m) \le 2h(m) + \alpha \varphi_1(m) \, ,
\end{equation}
whence,
\begin{equation}
0 \le \phi_2(m) - 2\phi_1(m) \le -\alpha\varphi_1(m) \, .
\end{equation}
It remains to show that $\varphi_1$ is a non-decreasing function so that $\varphi_1(m) \ge \varphi_1(0) = \log p(\knew)$. A simple computation of the derivative of $\varphi_1$ reveals that
\begin{equation}
\varphi_1'(m) = \E\left[ \frac{a_+'(m) e^{-a_+^2(m)/2} - a_{-}'(m)e^{-a_-^2(m)/2}}{\int_{a_{-}(m)}^{a_+(m)} e^{-u^2/2} \rmd u} \right]\, ,~~\mbox{with}~~~ a_\pm(m) = \frac{-mZ_0 \pm\knew}{\sqrt{1-m^2}} \, ,
\end{equation}
{  where $Z_0$ has p.d.f.~\eqref{eq:f_cond}, and the expectation is taken with respect to $Z_0$.} 
Using $a_{\pm}'(m) = \frac{-Z_0 \pm m\knew}{(1-m^2)^{3/2}}$, the numerator of the above expression can be written as follows:  
\begin{equation}
a_+'(m) e^{-a_+^2(m)/2} - a_{-}'(m)e^{-a_-^2(m)/2} = \frac{1}{(1-m^2)^{3/2}}\Big((m\knew -Z_0) e^{-a_+^2(m)/2} + (m\knew + Z_0)e^{-a_-^2(m)/2}\Big)\, .
\end{equation}
{ We will show that the above display is non-negative for all values of $Z_0$.} First, note that this expression is even as a function of $Z_0$ so we assume $Z_0\ge0$ without loss of generality. Now, since $Z_0 \le \kold$ a.s.\ the above expression is nonnegative if $m \ge \kold/\knew$. Now let us consider the remaining case $m < \kold/\knew$. Processing the numerator further we obtain  
\begin{align}
&a_+'(m) e^{-a_+^2(m)/2} - a_{-}'(m)e^{-a_-^2(m)/2}\\
&=\frac{e^{-(\knew^2+m^2Z_0^2)/(2(1-m^2))}}{(1-m^2)^{3/2}} \Big((m\knew -Z_0) e^{m\knew Z_0 / (1-m^2)} + (m\knew + Z_0)e^{-m\knew Z_0 / (1-m^2)}\Big)\\
&=\frac{e^{-(\knew^2+m^2Z_0^2)/(2(1-m^2))} \cosh\big(m\knew Z_0 / (1-m^2)\big)}{(1-m^2)^{3/2}} \Big(m\knew - Z_0 \tanh\big(m\knew Z_0 / (1-m^2)\big)\Big)\, .
\end{align}
From the bound $\tanh(x) \le x$ for $x \ge 0$ we see that 
\begin{equation}
m\knew - Z_0 \tanh\big(m\knew Z_0 / (1-m^2)\big) \ge m\knew\Big(1-\frac{Z_0^2}{1-m^2}\Big)\, .
\end{equation}
This is non-negative as long as $1 - \kold^2/(1-m^2) \ge 0$. Since $m < \kold/\knew$, this is verified when $1-(\kold/\knew)^2 \ge \kold^2$, i.e., when $\knew^2\ge \kold^2/(1-\kold^2)$.
\end{proof}

Next, we are ready to prove the main result of this section:
\begin{theorem}\label{thm:main_planted}
Under the assumptions of Theorem~\ref{thm:main1} we have 
\begin{align} 
 \lim_{\delta \to 0}\,\limsup_{\alpha \to 0}\, \limsup_{N \to \infty} \Big|\phi_{N,\delta}^{\pl}(r) - \max\big\{\phi_1(m)\, , \, \delta\big\}\Big|  = 0 \, ,\quad \quad r = (1-m)/2\, ,
\end{align}
where the limit in $\alpha$ is such that $\alpha \to 0, \knew \to 0$ with $\alpha \log(1/\knew) \to 0$.
\end{theorem}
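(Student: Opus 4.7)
The plan is to bound $\phi_{N,\delta}^{\pl}(r)$ from above and below using the moment estimates of Theorem~\ref{thm:moments}, the near-tightness given by Lemma~\ref{lem:closeness2}, and the concentration provided by Assumption~\ref{assump:concentration}. Throughout, I would exploit the symmetry of the planted model to set $\bx_0 = \one$ and work conditionally on $\bw$. The upper bound is the easier direction: by~\eqref{eq:firstmoment} the quenched first moment $\E[Z(\bx_0,\knew,r) \mid \bw]$ equals $e^{N\phi_1(m)+o(N)}$ on a $\bw$-set of probability tending to one, so conditional Markov gives $\P(Z \ge e^{N(\phi_1(m)+\epsilon)} \mid \bw) \le e^{-\epsilon N/2}$ for any $\epsilon > 0$. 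Splitting $\log_{N\delta} Z$ according to this event and using the deterministic bound $\log_{N\delta} Z \le N \log 2$ on the complement and on atypical $\bw$ yields $\limsup_N \phi_{N,\delta}^{\pl}(r) \le \max\{\phi_1(m),\delta\}$ after $\epsilon \to 0$.

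For the matching lower bound, the trivial inequality $\phi_{N,\delta}^{\pl}(r) \ge \delta$ handles the case $\phi_1(m) \le \delta$, so I would restrict to $\phi_1(m) > \delta$. The main tool is the Paley--Zygmund inequality applied conditionally on $\bw$:
\[
\P\Big(Z(\bx_0,\knew,r) \ge \tfrac{1}{2} \E[Z \mid \bw] \,\Big|\, \bw\Big) \ge \frac{(\E[Z \mid \bw])^2}{4\, \E[Z^2 \mid \bw]} \, .
\]
By Theorem~\ref{thm:moments} the right-hand side is at least $e^{-N(\phi_2(m)-2\phi_1(m))+o(N)}$ for typical $\bw$. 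The crucial input is Lemma~\ref{lem:closeness2}, which gives $\phi_2(m)-2\phi_1(m) \le \alpha \log(1/p(\knew))$; since $p(\knew) = \Theta(\knew)$ as $\knew \to 0$, the hypothesis $\alpha \log(1/\knew) \to 0$ forces this exponent arbitrarily close to zero. Hence for any $\eta > 0$ and $\alpha$ small enough, the Paley--Zygmund bound becomes $\ge e^{-\eta N}$ on a typical $\bw$-set, and on the corresponding event $\log_{N\delta} Z \ge N\phi_1(m) - O(1)$.

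I would then convert this small but positive-probability event into a bound on $\E[\log_{N\delta} Z \mid \bw]$ via Assumption~\ref{assump:concentration}. With $\bx_0 = \one$ fixed, a direct computation shows that the linear constraints defining $Z(\bx_0,\knew,r)$ reduce to the form $\langle \tilde{\bg}_a, \bx - m \one\rangle/\sqrt{N} \in [a_a,b_a]$ with widths $b_a - a_a = 2\knew$, so~\eqref{eq:conc} applies conditionally on $\bw$ with $\Delta = 2\knew$. Suppose for contradiction that $\E[\log_{N\delta} Z \mid \bw]/N \le \phi_1(m) - 2Ct$ for some $t>0$ and some typical $\bw$, where $C$ is the constant from Assumption~\ref{assump:concentration}. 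Then~\eqref{eq:conc} implies $\P(\log_{N\delta} Z \ge N(\phi_1(m)-Ct) \mid \bw) \le e^{-N\min\{t^2,t\}}$, which is incompatible with the Paley--Zygmund bound once $\eta < \min\{t^2,t\}$. Therefore $\E[\log_{N\delta} Z \mid \bw]/N \ge \phi_1(m) - 2Ct$ on a typical $\bw$-set; averaging (with $\log_{N\delta} Z \ge N\delta$ on the complement) and sending first $N \to \infty$, then $\alpha \to 0$, then $t \to 0$ yields $\liminf_N \phi_{N,\delta}^{\pl}(r) \ge \phi_1(m)$.

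The main difficulty will be the proper sequencing of the quantifiers: the concentration tail parameter $t$ must be chosen after $\alpha$ so that $\min\{t^2,t\}$ dominates the second-moment-ratio exponent, which itself vanishes only via the hypothesis $\alpha \log(1/\knew) \to 0$ combined with Lemma~\ref{lem:closeness2}; yet $t$ must be chosen before $N \to \infty$ so that the concentration bound~\eqref{eq:conc} takes effect. This ordering is consistent with the limit order $N \to \infty$, $\alpha \to 0$, $\delta \to 0$ in the theorem, but the budget for $t$ is what ties the proof together. Beyond this, the remainder of the argument amounts to careful accounting of typical events and of the truncation at $N\delta$.
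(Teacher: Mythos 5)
Your proposal is correct and follows essentially the same route as the paper: upper bound from the first moment (the paper uses Jensen where you use Markov), and lower bound from Paley--Zygmund conditionally on $\bw$, with Lemma~\ref{lem:closeness2} and the hypothesis $\alpha\log(1/\knew)\to 0$ making the second-moment exponent negligible, upgraded to a bound on the expectation via Assumption~\ref{assump:concentration}. The only cosmetic difference is that you run the concentration step as a proof by contradiction, whereas the paper phrases it directly by intersecting the Paley--Zygmund event with the concentration event $B$ and checking the intersection is nonempty; the two are logically equivalent.
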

We see that Theorem~\ref{thm:main1} follows from  Theorem~\ref{thm:main_planted} and Lemma~\ref{lem:closeness}. Now we prove Theorem~\ref{thm:main_planted}:
\begin{proof}
We write $Z=Z(\bx_0, \knew, r)$. All probabilities and expectations are taken under $\P_{\pl}$. 
For fixed $t,t'>0$ to be chosen later we define the events 
\begin{align}
&A = \left\{ \frac{1}{N}\log_{N\delta}\E_{\pl}\big[Z \,|\, \bw\big] - \frac{1}{N}\log_{N\delta} Z  \le \frac{\log 2}{N} \right\} \, ,~~~~ B = \left\{  \frac{1}{N} \log_{N\delta} Z -\phi_{N,\delta}^{\pl}(r) \le t' \right\} \, ,\\
C &= \left\{ \max\big\{\phi_1(m)\, , \, \delta\big\} - \frac{1}{N}\log_{N\delta}\E_{\pl}\big[Z \,|\, \bw\big]  \le t \right\}  \, ,~~~\mbox{and}~~
D = \left\{\frac{1}{N}\log\E_{\pl}\big[Z^2 \,|\, \bw\big]  - \phi_2(m) \le t \right\} \, .
\end{align}
First, we note that by Jensen's inequality, 
\begin{align}
\frac{1}{N}\E_{\pl} \big[\log_{N\delta} Z \,|\, \bw\big]
&\le \frac{1}{N} \log \E_{\pl}\big[ \max\{e^{N\delta}, Z\} \,|\, \bw\big]\\
&\le \frac{1}{N} \log \Big(e^{N\delta}+\E_{\pl}\big[Z \,|\, \bw \big]\Big)\\
&\le \frac{1}{N} \log \Big(2\max\big\{e^{N\delta}, \E_{\pl}\big[Z \,|\, \bw \big] \big\}\Big)\\
&= \frac{\log 2}{N} + \frac{1}{N}\log_{N\delta} \E_{\pl}\big[Z \,|\, \bw \big] \, .\label{eq:ine_1}
\end{align}
Since by Theorem~\ref{thm:moments}, $\frac{1}{N}\log \E_{\pl}\big[Z \,|\, \bw \big] \to \phi_1(m)$ almost surely as $N\to \infty$, we have by dominated convergence, 
\begin{equation}
\limsup_{N\to\infty} \phi_{N,\delta}^{\pl}(r) \le  \max\big\{\phi_1(m)\, , \, \delta\big\} \, .
\end{equation}
Next, under $A \cap B \cap C$ we have
\begin{equation}\label{eq:bound0}
\max\big\{\phi_1(m)\, , \, \delta\big\}  - \phi_{N,\delta}^{\pl}(r) \le \frac{\log 2}{N} + t+t' \, .
\end{equation}
Now the goal is to show that $\P_{\pl}(A\cap B \cap C) >0$.  
Let $\bw$ be such that $C \cap D$ holds. It follows by the Paley-Zigmund inequality that
\begin{align}
    \P_{\pl}(A \,|\, \bw ) &= \P_{\pl} \Big( \max\big\{e^{N\delta}, Z\big\} \ge \max\big\{e^{N\delta},\E_{\pl}[Z\, |\, \bw]\big\}/2  \, \big|\, \bw\Big) \\
    &\ge \P_{\pl} \Big( Z \ge \max\big\{e^{N\delta},\E_{\pl}[Z\, |\, \bw]\big\}/2 \, \big| \, \bw \Big)\\
    &\ge  \frac{\max\big\{\E_{\pl}[Z\, |\, \bw]^2 , e^{2N\delta}\big\}}{4\E_{\pl}[Z^2\, |\, \bw]} \\
    &\ge \frac{1}{4}\frac{\E_{\pl}[Z\, |\, \bw]^2}{\E_{\pl}[Z^2\, |\, \bw]} \\
    &\ge \frac{1}{4} \exp\Big(-N\big(\phi_2(m)-2\phi_1(m)+3t+2\delta\big)\Big) \, ,
\end{align}
where the last inequality follows from $C \cap D$. 
From Lemma~\ref{lem:closeness2} we have $\phi_2(m)-2\phi_1(m) \le \alpha \log(1/p(\knew))$ when $\knew^2 \ge \kold^2(1-\kold^2)$. Next by Theorem~\ref{thm:moments} we have $\P_{\pl}(C \cap D) \ge 1/2$ for $N$ large enough (it is actually $1-o_N(1)$). It follows that 
\begin{equation}
\P_{\pl}(A \cap C \cap D) \ge \frac{1}{8} \exp\Big(-N\big(\alpha \log(1/p(\knew))+3t+2\delta\big)\Big)\, . 
\end{equation}

On the other hand, by our concentration Assumption~\ref{assump:concentration}, $\P_{\pl}(B^c) \le \exp(-N \min\{t'^2/K^2,t'/K\})$ where $K= K(\delta,\Delta)$, $\Delta = (1-2r)\knew$ is the constant appearing in the assumption, we have by a union bound
\begin{equation}
\P_{\pl}( A \cap B \cap C \cap D) \ge \P_{\pl}(A \cap C \cap D) - \P_{\pl}(B^c) \ge  \frac{1}{8}\exp\Big(-N\big(\alpha \log(1/p(\knew))+3t+2\delta\big)\Big) - \exp\big(-N \min\{t'^2/K^2,t'/K\}\big) \, .
\end{equation}
Now we choose $t = \frac{1}{3}(2\delta+\alpha \log(1/p(\knew)))$ and $t'=t_N'$ such that $\min\{t_N'^2/K^2,t_N'/K\} = (\log 16) /N + 2\alpha \log(1/p(\knew)) + 4\delta$. 
We obtain
\begin{equation}
\P(  A \cap B \cap C \cap D) \ge \frac{1}{16}\exp\big(-2N\alpha \log(1/p(\knew)) - 4N\delta\big) > 0\, .
\end{equation}
 Therefore the bound~\eqref{eq:bound0} holds with this choice of parameters:
\begin{equation}\label{eq:bound1}
 \max\big\{\phi_1(m) \, , \, \delta\big\}  - \phi_{N,\delta}^{\pl}(r) \le \frac{\log 2}{N} +\frac{1}{3}\alpha \log(1/p(\knew)) + \frac{2}{3} \delta + t_N'  \, ,
\end{equation}
and we obtain 
\begin{equation}\label{eq:bound2}
\liminf_{N \to \infty} \phi_{N,\delta}^{\pl}(r) \ge  \max\big\{\phi_1(m) \, , \, \delta\big\}   -\frac{1}{3}\alpha \log(1/p(\knew)) - \frac{2}{3} \delta - t_{\infty}'  \, .
\end{equation}
Letting $\alpha \to 0,\kappa \to 0$ such that $\alpha \log(1/\kappa)\to0$ and then $\delta \to 0$  concludes the proof.
\end{proof}

\begin{proof}[Proof of Theorem~\ref{thm:moments}]
Let us start with the first moment. First, we have
\begin{align}
\E \Big[Z(\bx_0, \knew , r) \, \big| \, \bw \Big] &= \sum_{\bx \in \{-1,+1\}^N} \E \Big[\one\big\{\bx\in S(\bG, \knew)\, , \langle \bx_0 , \bx \rangle = N m \big\} \, \big| \, \bw \Big] \\
&= \frac{1}{2^N}\sum_{\bx_0,\bx \in \{-1,+1\}^N}   \one\big\{ \langle \bx_0 , \bx \rangle = N m \big\} \prod_{a=1}^{M}\P\Big( \big|\langle \bg_a , \bx \rangle  \big| \le \knew \sqrt{N} \, \big| \, \bx_0, w_a\Big) \, .
\end{align}
We further have 
\begin{align}
\frac{1}{\sqrt{N}}\langle \bg_a , \bx \rangle &= \frac{w_a}{N} \langle \bx , \bx_0 \rangle  + \frac{1}{\sqrt{N}}\langle \tilde{\bg}_a,  \big(\bI - \frac{1}{N}\bx_0 \bx_0^\top\big) \bx \rangle  \\
& \stackrel{\rmd}{=} m w_a + \sqrt{1-m^2} Z\, ,
\end{align}
where $Z \sim N(0,1)$ independently.
Using Stirling's formula, we obtain 
\begin{align}
\frac{1}{N} \log \E \Big[Z(\bx_0 , \knew , m) \, \big| \, \bw \Big] &=  h(m) + \frac{1}{N} \sum_{a=1}^{M} \log \P\big( | m w_a + \sqrt{1-m^2}Z | \le \knew  \, \big| \, w_a \big) + o_N(1) \, .
\end{align}
An application of the strong law of large numbers yields the formula in Eq.~\eqref{eq:firstmoment}.
 
We now calculate the second moment:
\begin{align}
\E \Big[Z(\bx_0 , \knew , m)^2 \, \big| \, \bw \Big] &= \sum_{\bx^1, \bx^2 \in \{-1,+1\}^N} \E \Big[\one\big\{\bx^i \in S(\bG, \knew)\, , \langle \bx_0 , \bx^i \rangle = N m \, , i=1,2 \big\}\Big] \\
&= \frac{1}{2^N}\sum_{\bx_0,\bx^1, \bx^2 \in \{-1,+1\}^N}   \hspace{-0.2cm}\one\big\{ \langle \bx_0 , \bx^i \rangle = N m \, , i=1,2\big\} \prod_{a=1}^{M}\P\Big( \big|\langle \bg_a , \bx^i \rangle  \big| \le \knew \sqrt{N} \, , i=1,2 \, \big| \, \bx_0, w_a\Big) \, .
\end{align}
Fix $m,q \in [-1,1]$ and three vectors $\bx_0, \bx^1$, $\bx^2$ such that $\langle\bx^1,\bx^2\rangle = N q$, and  $\langle\bx^i,\bx_0\rangle = N m$, for $i=1,2$. Then as before,
\begin{align}
\frac{1}{\sqrt{N}}\langle \bg_a , \bx^i \rangle &= \frac{w_a}{N} \langle \bx^i , \bx_0 \rangle  + \frac{1}{\sqrt{N}}\langle \tilde{\bg}_a,  \big(\bI - \frac{1}{N}\bx_0 \bx_0^\top\big) \bx^i \rangle  \\
& \stackrel{\rmd}{=} m w_a +  Z_i\, ,
\end{align}
where the pair $(Z_1,Z_2)$ is defined as in Eq.~\eqref{eq:cov}. 
 Furthermore, by symmetry we can assume that $\bx_0 = \one$, and we define the set 
 \begin{equation}
 C(m,q) = \Big\{\bx^1, \bx^2 \in \{-1,+1\}^N\, :\,  \langle\bx^1,\bx^2\rangle = N q \, , \langle \one , \bx^i \rangle = N m \, , i=1,2\Big\} \, .
 \end{equation}
We have
\begin{align}
\E \Big[Z(\bx_0 , \knew , m)^2 \, \big| \, \bw \Big] &= \sum_{q \in [-1,1] \cap \Z/N}  \big|C(m,q) \big|
\prod_{a=1}^{M}  \P\Big( \big| m w_a +Z_i  \big| \le \knew \, , i=1,2 \, \big| \, w_a\Big) \, .
\end{align}
Therefore 
\begin{equation}\label{eq:cmq}
\frac{1}{N} \log \E \Big[Z(\bx_0 , \knew , m)^2 \, \big| \, \bw \Big]  = \max_{q \in [-1,1] \cap \Z/N} \left\{ \frac{1}{N} \log \big|C(m,q) \big| 
+  \frac{1}{N} \sum_{a=1}^{M} \log \P\Big( \big| m w_a +Z_i  \big| \le \knew \, , i=1,2 \, \big| \, w_a\Big) \right\} + o_N(1) \, .
\end{equation}

{  Next we compute the size of $C(m,q)$. We can write
 \begin{equation}
\big|C(m,q) \big| = \sum_{\vect{k}} \binom{N}{\vect{k}} ,
 \end{equation}
 where $\binom{N}{\vect{k}} = \binom{N}{k_{+,+},k_{+,-},k_{-,+},k_{-,-}}$ is the multinomial coefficient and the sum is restricted to those integers satisfying    
 \begin{align}
k_{+,+}+k_{+,-}+k_{-,+}+k_{-,-} &= N\,,\\
k_{+,+}+k_{+,-}-k_{-,+}-k_{-,-} &= Nm\,,\\
k_{+,+}-k_{+,-}+k_{-,+}-k_{-,-} &= Nm\,,\\
k_{+,+}-k_{+,-}-k_{-,+}+k_{-,-} &= Nq \, .
\end{align}
}
Using Stirling's formula we find   
 \begin{equation}
\frac{1}{N} \log \big|C(m,q) \big| = \max_{(\theta_1,\theta_2)} h(\theta_1,\theta_2) + o_N(1)\, ,
 \end{equation}
 where the maximization is as in Eq.~\eqref{eq:2ndmoment}. {  (The correspondence being that $\P(\theta_1=\epsilon,\theta_2=\epsilon') = k_{\epsilon,\epsilon'}/N$.)}

Moreover, letting $\theta(w) := \log \P\Big( \big| m w +Z_i  \big| \le \knew \, , i=1,2 \, \big| \, w\Big)$ the average $X_N = \frac{1}{N} \sum_{a=1}^M  \theta(w_i)$ has a subGaussian tail in $N$, i.e., $\P(|X_N - \E[X_N]| \ge t) \le 2 e^{-Nt^2/(2C)}$ for some constant $C>0$ by the Azuma-Hoeffding inequality. Since the maximum in Eq.~\eqref{eq:cmq} is taken over no more than $2N+1$ values we can let $t=t_N \to 0$ slowly with $N$ such that $\sum_{N} N e^{-Nt_N^2/(2C)}<\infty$. The Borel--Cantelli lemma and continuity allow us to conclude the proof. 
\end{proof}

\section{Analysing the local entropy and its thresholds}
\label{sec: local entropy analysis}
Having shown in Theorem \ref{thm:main1} that the local entropy $\phi_{N,\delta}(r)$ is asymptotically given by the formula  $\max\{0,\phi_1(r)\}$ in the limit $N\to \infty$, $\alpha \to 0$ then $\delta \to 0$, where
\begin{align}
     \phi_1\left(r=\frac{1-m}{2}\right) = h(m)  + \alpha \varphi_1(m)\, .
\end{align}
 We will now focus on the analysis of this function. 
In App.~\ref{app: A} we derive the local entropy for generic values of these parameters and show a posteriori how we can recover the limit presented above. 

A first step to simplify our analysis is to rewrite $\varphi_1(m)$ in the following fashion
\begin{align}
    \varphi_1(m)= \int\mathcal{D}Z_0 \frac{\one\{|Z_0| \le \kold \}}{\mathcal{N}_{\kold}} \, \log\left\{\frac{1}{2}\erf{\frac{\knew+Z_0}{\sqrt{2(1-m^2)}}}+\frac{1}{2}\erf{\frac{\knew-Z_0}{\sqrt{2(1-m^2)}}}\right\} \, ,
\end{align}
where $\mathcal{N}_{\kold}=\P(|Z_0|\le \kold)$, and we let $\mathcal{D}Z_0 = \frac{e^{-Z_0^2/2}}{\sqrt{2\pi}}\rmd Z_0$ denote the Gaussian measure. We recall that the error function is
\begin{align}
    \erf{x}=\frac{2}{\sqrt{\pi}}\int_0^x e^{-t^2}dt\, .
\end{align}

In fact, when $\alpha\ll 1$ the local entropy is a non-trivial function for only a restricted range of parameters $\knew$ and~$m$.
For this to happen the entropic and energetic contributions have to be comparable. This leads us to introduce a rescaling of the form 
\begin{align}
1-{m}^2= -\alpha  \tilde{r} /\log(\alpha) \, ,~~~~ \kold={\tkold} \sqrt{-\alpha /\log(\alpha)}\, ,~~~~ \mbox{and}~~~~ \knew={\tknew} \sqrt{-\alpha /\log(\alpha)} \, ,
\end{align}
in order to have both $\varphi_1(m)$ and $h(m)$ contributing as a $\mathcal{O}(\alpha)$ in the local entropy when $\alpha\ll 1$.
This first indicates that we can restrict our analysis to a regime where $1-m\ll 1 $. Consequently, the entropic term is simplified to
\begin{align}
    h(m) &= -\frac{1-m}{2}\log{\left(\frac{1-m}{2}\right)}+o(\alpha)\\
         &=\frac{\alpha\tilde{r}}{4}+o(\alpha)\, .\nonumber 
\end{align}

Then, using this rescaling we obtain the simplified form of the local entropy and the equation for its local maxima (at $\tilde{r}\neq 0$)
\begin{align}
\label{eq: entropy planting rescaled}
    \phi_1\left(r=\frac{-\alpha  \tilde{r} }{4\log(\alpha)}\right)&= \frac{ \alpha\tilde{r}}{4}+\frac{\alpha }{{\mathcal{N}}_{\tkold}}\int\! \mathcal{D}Z_0\, \one\{|Z_0| \le \tkold \}\,\log\left\{\frac{1}{2}\erf{\frac{\tknew+Z_0}{\sqrt{2\tilde{r}}}}+\frac{1}{2}\erf{\frac{\tknew-Z_0}{\sqrt{2\tilde{r}}}}\right\}+o(\alpha)\, ,\\
\label{eq: local max planting rescaled}
   1&=\frac{4}{{\mathcal{N}}_{\tkold}}\int \mathcal{D}Z_0\frac{\one\{|Z_0| \le \kold \}}{\erf{\frac{\tknew+Z_0}{\sqrt{2\tilde{r}}}}+\erf{\frac{\tknew-Z_0}{\sqrt{2\tilde{r}}}}}   \left\{\frac{(\tknew+Z_0)e^{\frac{-(\tknew+Z_0)^2}{2\tilde{r}}}}{\sqrt{2\pi}\tilde{r}^{3/2}}+\frac{(\tknew-Z_0)e^{\frac{-(\tknew-Z_0)^2}{2\tilde{r}}}}{\sqrt{2\pi}\tilde{r}^{3/2}}\right\}+o(\alpha)\, 
\end{align}
with again $\mathcal{N}_{\tkold}=\P(|Z_0|\le \tkold)$.
The presence of this local maximum in the potential tells us that there is a cluster of atypical solutions with margin~$\knew$ around each typical configuration with margin $\kold$. In the following, we will denote as $s[\tkold,\tknew]$ the local entropy evaluated at this maximum.

In Fig.~\ref{fig:different_kappa0} we display the behavior of the local entropy $s[\tkold,\tknew]$ as a function of $\tkold$ and $\tknew$. As outlined by the dashed line, clusters exist only for a finite span of values for $\tknew$, which depends on the margin $\tkold$ of the reference vector $\bx_0$. Defining $\tkentro(\tkold)$ as the critical value of $\tknew$ for which clusters disappear, we see from the figure that $\tkentro\equiv {\rm min}_{\tkold} \tkentro(\tkold) =\tknew_{\rm entr}(\tkold =0)$. In other words, the first clusters to disappear are the ones formed around a reference vector at $\tkold =0$. In particular, this corresponds to planting at $\tkold =\tilde{\kappa}_{\sSAT}$ as we have
\begin{equation}
\kappa_{\sSAT}(\alpha) \underset{\alpha\rightarrow0}{\sim}\sqrt{\frac{\pi}{2}}e^{\frac{-\log 2}{\alpha}} \quad \text{and}\quad\tilde{\kappa}_{\sSAT}=\kappa_{\sSAT}(\alpha)\sqrt{\frac{-\log(\alpha)}{\alpha}}\,\, \xrightarrow[\alpha \to 0]{} \,\, 0\, .
\end{equation}
Since clusters are associated with AMP/TAP fixed points, $\tkentro$ corresponds to the margin above which the AMP/TAP initialized close to a typical solution with margin $\kold={\kappa}_{\sSAT}$ converges to the same fixed point as would be reached from a random initialization. The existence of solutions from which AMP/TAP converges to this trivial fixed point was linked to the onset of a region where algorithms may be able to find solutions. More precisely, numerical evidence in the literature suggests that solutions that are found by efficient algorithms do not correspond to other AMP/TAP fixed points than the one reached from random initialization \cite{maneva2007new,braunstein2004survey}.  

As shown in Fig.~\ref{fig: local entropy and complexity}, in which we plant at $\kold=\kappa_{\sSAT}$, the local entropy undergoes two interesting thresholds with distinctive values of $\tknew$ (for fixed $\alpha$). One being the value of $\tknew$ above which the potential remains positive for all $m$, we will refer to it as the {\it energetic} threshold with $\tknew=\tkenerg(\tkold)$. In other words, this means that above this critical margin we can find solutions to the symmetric binary perceptron with margin~$\knew$ at any distance from the reference vector. The second critical value for $\tknew$ corresponds to the loss of the local maximum at $m\neq 0$ in the potential. This corresponds to the {\it entropic} threshold that we mentioned earlier with $\tknew=\tkentro(\tkold)$.

In the two following sections, we focus our analysis on these two thresholds in the case where $\tkold=0$. Again, this choice is justified by the fact that the {\it energetic} and {\it entropic} threshold happen first when planting at $\tkold=\kappa_{\sSAT} \to_{\alpha \to 0} 0$, i.e.
\begin{align}
    \tkenerg(0)&=\underset{\tkold}{\rm min}\,\tkenerg(\tkold)\, ,\\
    \tkentro(0)&=\underset{\tkold}{\rm min}\,\tkentro(\tkold)\, .
\end{align}
Similarly to the entropic threshold, we will use in the following the shortening $\tkenerg(0)=\tkenerg$.

\subsection{Energetic threshold}
The {\it energetic} threshold occurs when in a range of intermediate distances the local entropy $\phi_1(r)$ is negative. This means that we want to find the exact point where the minimum of the entropy (excluding $m=1$) is zero. 
We start by setting $\tkold=0$ in Eq.~(\ref{eq: entropy planting rescaled}) to obtain the simplified form of the local entropy
\begin{align}
\label{eq: simplified rescaled free energy}
 \phi_1\left(r=\frac{-\alpha  \tilde{r}}{4\log(\alpha)}\right)=\frac{\alpha \tilde{r}}{4}+\alpha\log\left[{\rm erf}\left(\frac{\tkenerg}{\sqrt{2 \tilde{r}}}\right)\right]+o(\alpha)\, .
\end{align}
The potential is then null when
\begin{align}
\label{eq: energ transition 1}
    1=-4\frac{\log\left[{\rm erf}\left(\frac{\tkenerg}{\sqrt{2 \tilde{r}}}\right)\right]}{\tilde{r}}+o(1)
\end{align}
and the r.h.s of the upper equation has a maximum for
\begin{align}
\label{eq: energ transition 2}
    \frac{\log\left[{\rm erf}\left(\frac{\tkenerg}{\sqrt{2 \tilde{r}}}\right)\right]}{\tilde{r}^2}+\sqrt{\frac{2}{\pi}}\frac{\tkenerg e^{\frac{-\tkenerg^2}{2\tilde{r}}}}{{\rm erf}\left(\frac{\tkenerg}{\sqrt{2 \tilde{r}}}\right) \tilde{r}^{5/2}}=o(1)\, .
\end{align}
Finally, if we solve the two previous equations, we obtain the set of values $\{\tkenerg,\tilde{r}\}$ for which the potential stops being negative for any value of the magnetization $m$. Numerically we obtain
\begin{align}
    \kenerg&=\tkenerg \sqrt{-\alpha /\log(\alpha)}+o\left(\frac{\alpha}{\log(\alpha)}\right)\approx1.238518 \sqrt{-\alpha /\log(\alpha)}\, ,\\
    1-m^2&=-\alpha\tilde{r} /\log(\alpha)+o\left(\frac{\alpha}{\log(\alpha)}\right)\approx-1.351180\,\alpha /\log(\alpha)\, .
\end{align}

\subsection{Entropic threshold}
The {\it entropic} threshold occurs when the local maximum other than $m \neq 0$ of the free entropy cease to exist. We recall that the local entropy for $\tkold=0$ reads
\begin{align}
\label{eq: simplified rescaled free energy 2}
 \phi\left(r=\frac{-\alpha  \tilde{r}}{4\log(\alpha)}\right)=\frac{\alpha \tilde{r}}{4}+\alpha\log\left[{\rm erf}\left(\frac{\tkenerg}{\sqrt{2 \tilde{r}}}\right)\right]+o(\alpha)
\end{align}
with a non-trivial local maximum obtained by solving the fixed point equation
\begin{align}
\label{eq: entro transition 1}
    \frac{\alpha}{4}=\alpha\frac{\tkenerg e^{-\frac{\tkenerg^2}{2\tilde{r}}}}{\sqrt{2\pi}  \tilde{r}^{3/2}  {\rm erf}\left(\frac{\tkenerg}{\sqrt{2 \tilde{r}}}\right)}+o(\alpha)\,.
\end{align}
Again, the r.h.s. of the previous has a maximum for
\begin{align}
\label{eq: entro transition 2}
    \frac{\tkenerg^2}{2\tilde{r}^2}-\frac{3}{2\tilde{r}}+\frac{\tkenerg e^{-\frac{\tkenerg^2}{2\tilde{r}}}}{\sqrt{2\pi}\tilde{r}^{3/2}  {\rm erf}\left(\frac{\tkenerg}{\sqrt{2 \tilde{r}}}\right)}=o(1)\, .
\end{align}

Finally, we can solve numerically the two previous equations and we obtain
\begin{align}
    \kentro&=\tkentro \sqrt{-\alpha /\log(\alpha)}+o\left(\frac{\alpha}{\log(\alpha)}\right)\approx1.428754 \sqrt{-\alpha /\log(\alpha)}\, ,\\
    1-m^2&=-\alpha\tilde{r} /\log(\alpha)+o\left(\frac{\alpha}{\log(\alpha)}\right)\approx-0.782487\,\alpha /\log(\alpha)\, .
\end{align}

\subsection{Complexity versus entropy}
\label{subsec: Complexity vs entropy}
In this section, we focus on the relation between the complexity of the clusters around the high-margin solutions and their local entropy.   
We define the complexity as the logarithm of the number of clusters around solutions at margin $\kold$, normalized by $N$, and we recall that the local entropy of a cluster is the value of the local entropy $\phi_1(r=\frac{1-m}{2})$ at the nearest local maximum to the reference solution. 
By contiguity to the planted model, the clusters of solutions with margin $\knew>\kold$ living around two different planted configurations are distant, since the reference configurations are nearly orthogonal with high probability. Thus, heuristically, counting their exponential number (or complexity) simply consists of enumerating the number of typical solutions at $\kold$ we can plant. 

Taking these previous considerations into account the obtained clusters have a complexity that depends solely on $\kold$ while their local entropy is a function of $\knew$ and $\kold$. Fixing $\knew$ while tuning $\kold$ enables us to scan across sets of clusters with different complexities and local entropies, all containing atypical solutions of the symmetric binary perceptron with margin~$\knew$. More specifically, the complexity is
\begin{align}
\label{eq: complexity}
    \Sigma[\kold]=\log 2+\alpha\log \P(|Z| \le \kold)=\log 2 +\alpha\log\left\{\erf{\frac{\kold}{\sqrt{2}}}\right\}\, .
\end{align}
and the entropy of a cluster is 
\begin{align}
   s[\kold,\knew]=&-\frac{1-m}{2}\log{\left(\frac{1-m}{2}\right)}+\frac{\alpha}{\mathcal{N}_{\kold}}\int\! \mathcal{D}Z_0\, \one\{\vert Z_0\vert \le \kold\}\,\log\left\{\frac{1}{2}\erf{\frac{\knew+Z_0}{\sqrt{2(1-m^2)}}}+\frac{1}{2}\erf{\frac{\knew-Z_0}{\sqrt{2(1-m^2)}}}\right\}\,\\
  &+o\left(\frac{1-m}{2}\log{\left(\frac{1-m}{2}\right)}\right) \, ,\nonumber
\end{align}
in which $m$ is evaluated with the fixed-point equation
\begin{equation}
\label{eq: local max planting}
   -\log{\left(\frac{1-m}{2}\right)}+o\left(\log{\left[\frac{1-m}{2}\right]}\right)=
   \frac{4\alpha m}{{\mathcal{N}}_{\kold}}
   \int \mathcal{D}B\frac{\one\{\vert B\vert \le \kold\}}{\erf{\frac{\knew+B}{\sqrt{2(1-m^2)}}}+\erf{\frac{\knew-B}{\sqrt{2(1-m^2)}}}}   \left\{\frac{(\knew+B)e^{\frac{-(\knew+B)^2}{2(1-m^2)}}}{\sqrt{2\pi}(1-m^2)^{3/2}}+\frac{(\knew-B)e^{\frac{-(\knew-B)^2}{2(1-m^2)}}}{\sqrt{2\pi}(1-m^2)^{3/2}}\right\} \, .
\end{equation}

Using the rescaling from the previous section we can finally write for these two functions in the leading order in $\alpha \to 0$ :
\begin{align}
\label{eq: s planted}
     s[\tkold,\tknew]&= \frac{ \alpha\tilde{r}}{4}+\frac{\alpha }{{\mathcal{N}}_{\tkold}}\int\! \mathcal{D}B\, \one\{\vert B\vert \le \kold\}\,\log\left\{\frac{1}{2}\erf{\frac{\tknew+B}{\sqrt{2\tilde{r}}}}+\frac{1}{2}\erf{\frac{\tknew-B}{\sqrt{2\tilde{r}}}}\right\}+o({\alpha})\, ,\\
\label{eq: sigma planted}
     \Sigma[\tkold]&=\log(2)+\frac{\alpha}{2}\log\left( \frac{\alpha}{\log\alpha}\right)+\alpha\log\left({\tkold}\right)=\Sigma_o +\alpha\log\left(\sqrt{\frac{2}{\pi}}{\tkold}\right)\, ,
\end{align}
where we recall that $\tilde{r}$ is evaluated with
\begin{align}
      1&=\frac{4}{{\mathcal{N}}_{\tkold}}\int \mathcal{D}B\frac{\one\{\vert B\vert \le \kold\}}{\erf{\frac{\tknew+B}{\sqrt{2\tilde{r}}}}+\erf{\frac{\tknew-B}{\sqrt{2\tilde{r}}}}}   \left\{\frac{(\tknew+B)e^{\frac{-(\tknew+B)^2}{2\tilde{r}}}}{\sqrt{2\pi}\tilde{r}^{3/2}}+\frac{(\tknew-B)e^{\frac{-(\tknew-B)^2}{2\tilde{r}}}}{\sqrt{2\pi}\tilde{r}^{3/2}}\right\}+o({\alpha})\, 
\end{align}
and
\begin{align}
1-{m}^2= -\alpha  \tilde{r} /\log(\alpha)\, ,\;
\kold={\tkold} \sqrt{-\alpha /\log(\alpha)}\, ,\; 
\knew={\tknew} \sqrt{-\alpha /\log(\alpha)}\, , \;\Sigma_o=\log(2)+\alpha\log\left( \frac{-\alpha}{\log\alpha}\right)\, .
\end{align}

In Fig.~\ref{fig: local entropy and complexity} the right-hand side displays several curves of complexity $\Sigma[\tkold]$ as a function of the local entropy $s[\tkold,\tknew]$ for fixed values of $\tknew$. Three regimes can be outlined for $\knew < \kentro$. First, for $s[\tkold,\tknew]\approx 0$, we have locally convex curves (and $\tkold-\tknew=o(1)$). This result appears quite surprising as usually these $\Sigma(s)$ curves are fully concave \cite{braunstein2003polynomial,zdeborova2007phase}.
Then, the curve becomes concave while having $s[\tkold,\tknew]=\mathcal{O}(\alpha)$ and  $\tkold-\tknew=\mathcal{O}(1)$. In this regime, the complexity continues to scale as $\Sigma[\tkold]-\Sigma_o=\mathcal{O}(\alpha)$ and the local entropy is upper bounded by $s[0,\tknew]$. Finally, if we set $\tkold\ll \tknew$ (i.e. $\kold=o\left(\sqrt{-\alpha/\log(\alpha)}\right)\,$) the complexity jumps from $\Sigma[\tkold]\approx\Sigma_o$ to $\Sigma[\tkold]=0$. In this case, the entropy remains fixed (in first order) at $s[\tkold,\tknew]=s[0,\tknew]$. We sketched these three regimes for the complexity versus entropy curves in Fig.~\ref{fig:Complexity entropy sketch}.
For $\tknew > \tkentro$ only the first regime exists since for small enough $\kold$ the local maximum of the potential disappears. 

\begin{figure}[h!]
\hspace{-2cm}
    \centering
    \includegraphics[width=0.6\textwidth]{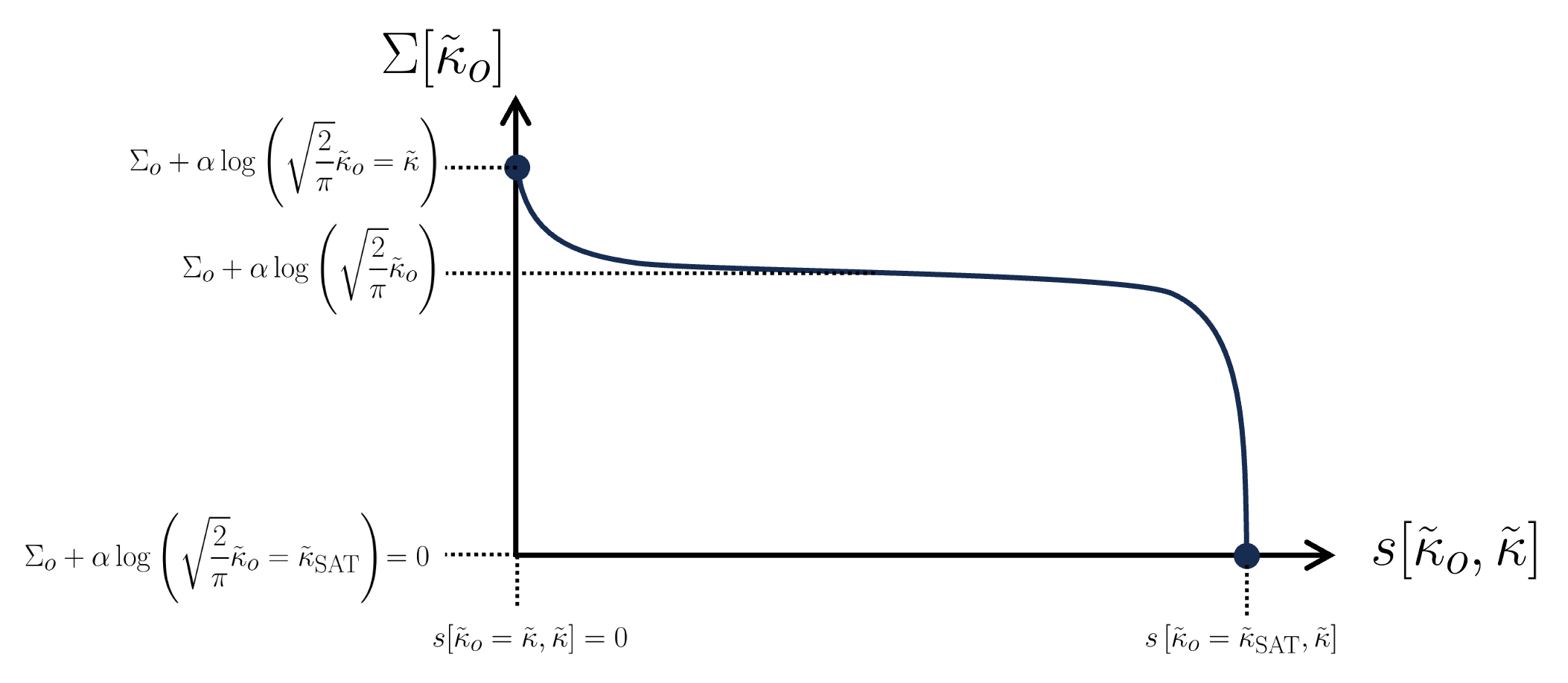}
    \caption{Sketch of the behavior of the complexity as a function of the local entropy. First, close to $s[\tkold,\tknew]=0$, we observe a regime in which the curve is convex. Then, as $\tkold$ is lowered (keeping $\tkold-\tknew=\mathcal{O}(1)$) the curve becomes concave. The complexity remains such that $\Sigma[\tkold]-\Sigma_o=\mathcal{O}(\alpha)$ while the entropy increases up until the upper bound $s[0,\tknew]$. Finally, as we set $\tkold\ll\tknew$, the entropy remains constant at $s[0,\tknew]$ while the complexity drops to zero. }
    \label{fig:Complexity entropy sketch}
\end{figure}

\section{Analysis of the clustered structure through the replica method}
\label{sec:1RSB}
\subsection{The 1-RSB free energy}
In this section, we show how the clustered structures we obtained with the planting approach can also be observed via the ordinary 1-RSB computation \cite{monasson1995structural}. For this we will consider the set of solutions $S(\bG, \knew)$ of the unbiased symmetric binary perceptron. In particular, we will consider its cardinality
\begin{align}
Z(\knew) := \big| S(\bG, \knew)\big|
\end{align}
and its total entropy function as the logarithm of $Z$ averaged over the disorder $\bG$
\begin{equation}
\phi_{N} := \frac{1}{N} \E_{\bG}\left[  \log Z( \knew ) \right]\, .
\end{equation}
So as to perform the average over the disorder we will use the replica trick \cite{monasson1995structural}. This trick takes the form of
\begin{align}
    \E_{\bG}\left[  \log Z( \knew ) \right]\underset{n\rightarrow 0}{=}\frac{ \E_{\bG}\left[ Z^n( \knew )\right]-1}{n}
\end{align}
where each of the $n$ introduced copies of the system is called a replica. This technique enables to shift from a computation where interactions are random and the replica decoupled to a computation where the replica interacts with deterministic couplings. With this approach, the rest of the computation mainly consists in evaluating the quantity $\E_{\bG}\left[ Z^n( \knew )\right]$ at a fixed-point of the overlap matrix $Q\in {\rm I\!R^{n\times n}}$, where
\begin{align}
    Q^{a,b}=\E_{\bG}\left[\by^a\cdot \by^{b}  \,\,\Big\vert\,\, \by^a,\by^b\in S(\bG, \knew) \right]\quad\text{and} \quad a,b\in[\![1,n]\!]\, .
\end{align}
Moreover, as the constraints on the overlaps are introduced in the following fashion
\begin{align}
    \delta\left(\by^a\cdot \by^{b}-Q^{a,b}\right)=\int d\hat{Q}^{a,b} e^{i \hat{Q}^{a,b} \left(\by^a\cdot \by^{b}-Q^{a,b}\right) }\underset{N\rightarrow +\infty}{=} \int d\hat{Q}^{a,b} e^{ \hat{Q}^{a,b} \left(\by^a\cdot \by^{b}-Q^{a,b}\right) }, 
\end{align}
we will have also to evaluated $\E_{\bG}\left[ Z^n( \knew )\right]$ at a fixed-point of the matrix  $\hat{Q}$. In more detail, the computation consists of evaluating
\begin{align}
\label{eq: general free energy}
    \E_{\bG}\left[ Z^n( \knew )\right]=\prod_{a=1}^n\left(\sum_{\by^a\in \{-1,1\}^N}\int\prod_{a<b}dQ^{a,b} d\hat Q^{a,b} e^{ \hat{Q}^{a,b} \left(\by^a\cdot \by^{b}-Q^{a,b}\right) }  \int \prod_{a=1}^n \prod_{j=1}^M \left[d v^{a,j} \Theta\big(\kappa-v^{a,j}\big)\right] e^{-\frac{1}{2}\sum_{a,b}v^{a,j} \Sigma^{a,b} v^{b,j}}\right) .
\end{align}
with
\begin{align}
    v^{a,j}=\bg_j\cdot \by^a\quad\text{and}\quad \Sigma^{a,b}=\E_{\bG}\left[(\bg_j\cdot \by^a)(\bg_j\cdot \by^b)\right]=\E_{\bG}\left[\by^a\cdot \by^b\right]=Q^{a,b}
\end{align}

The computation of $\E_{\bG}\left[ Z^n( \knew )\right]$ with the 1-step replica symmetric (1-RSB) {\it ansatz} implies the following form for the matrices $Q$ and $\hat Q$
\begin{align}
Q^{a,b}=\left\{
    \begin{array}{ll}
       1 & \text{if}\; a= b \\
       q_1 & \lfloor\frac{a}{x}\rfloor= \lfloor\frac{b}{x}\rfloor \\
       q_0 & \text{otherwise}\\
    \end{array}
\right.\! 
\quad\text{and}\quad
\hat Q^{a,b}=\left\{
    \begin{array}{ll}
       \hat{Q} & \text{if}\; a= b \\
       \hat q_1 & \lfloor\frac{a}{x}\rfloor= \lfloor\frac{b}{x}\rfloor \\
       \hat q_0 & \text{otherwise}\\
    \end{array}
\right.\! .
\end{align}
With this ansatz Eq.~(\ref{eq: general free energy}) boils down to
\begin{align}
\phi^{\rm 1-RSB}=\lim_{n \to 0}\, \lim_{N \to \infty} \frac{ \E_{\bG}\left[ Z^n( \knew )\right]-1}{nN}=&-\frac{x\hat{q}_1}{2}+\frac{x(1-x)q_1\hat{q}_1}{2}+\frac{x^2 q_0\hat{q}_0}{2}+\alpha\int \mathcal{D}t \, \log\left\{\int \mathcal{D}z \, e^{x\phi^{\knew}_{\rm out}[\sqrt{q_1-q_0}z+\sqrt{q_0}t,1-q_1]}\right\}\nonumber\\
&+\int \mathcal{D}t \, \log\left\{\int \mathcal{D}z \, e^{x\phi_{\rm in}[\sqrt{\hat{q}_1-\hat{q}_0}z+\sqrt{\hat{q}_0}t]}\right\}
\end{align}
with
\begin{align}
\phi^{\knew}_{\rm out}[\omega,V]&=\log\left[\int_{\frac{-\omega-\knew}{\sqrt{V}}}^{\frac{-\omega+\knew}{\sqrt{V}}} \mathcal{D}u\right]=\log\left[\frac{1}{2}{\rm erf}\left(\frac{\knew-\omega}{\sqrt{2V}}\right)+\frac{1}{2}{\rm erf}\left(\frac{\knew+\omega}{\sqrt{2V}}\right)\right]\, ,\\
\phi_{\rm in}[B]&=\log\left[\sum_{x=\pm 1}e^{Bx}\right]=\log\left[2\cosh{(B)}\right]\, .
\end{align}
For more details on the calculation steps to derive $\phi^{\rm 1-RSB}$ we redirect the interested readers to the first appendix of \cite{aubin2019storage}. Before moving on with the analysis of the 1-RSB potential, a first simplification consists in taking into account a symmetry in the in/out channels: $\phi^{\knew}_{\rm out}[\omega,V]=\phi^{\knew}_{\rm out}[-\omega,V]$ and $\phi_{\rm in}[B]=\phi_{\rm in}[-B]$. Indeed, this symmetry implies that optimizing the potential yields the solution $q_0=\hat{q}_0=0$. Thus, in the following, we will always take this solution. 
Then, the remaining equations we have to verify for the fixed point are
\begin{align}
    \label{eq: 1-RSB saddle 1}
    \hat{q}_1&=f(q_1)=-\frac{2\alpha}{(1-x)}\times \dfrac{\int \mathcal{D}z \,\partial_{q_1}\phi^{out}[\sqrt{q_1}z,1-q_1] e^{x\phi^{out}[\sqrt{q_1}z,1-q_1]}}{\int \mathcal{D}z \, e^{x\phi^{out}[\sqrt{q_1}z,1-q_1]}}\, ,\\
    \label{eq: 1-RSB saddle 2}
    q_1&=g(\hat q_1)=\frac{2}{1-x}\left[\frac{1}{2}-\dfrac{\int \mathcal{D}z \,\partial_{\hat{q}_1}\phi^{in}[\sqrt{\hat{q}_1}z] e^{x\phi^{in}[\sqrt{\hat{q}_1}z]}}{\int \mathcal{D}z \, e^{x\phi^{in}[\sqrt{\hat{q}_1}z]}}\right]\, .
\end{align}

With these definitions, the entropy and complexity of the clusters can be determined at the fixed point as
\begin{align}
\label{eq: 1-RSB defs}
    s=\partial_x \phi^{\rm 1-RSB}\, , \quad  \Sigma=\phi^{\rm 1-RSB}-xs\quad \text{and}\quad \frac{\partial \Sigma}{\partial s}=-x\, .
\end{align}

\subsection{The 1RSB solution at finite $\alpha$}

When it comes to solving the 1RSB equations, we focus in this subsection on $\alpha=0.5$ as a representative value not close to zero, the corresponding satisfiability threshold is $\kappa_{\sSAT}(\alpha=0.5)=0.319$. We obtained four branches of solutions when solving the fixed-point equations (\ref{eq: 1-RSB saddle 1}, \ref{eq: 1-RSB saddle 2}) with respect to $q_1$ and $\hat{q}_1$ for the 1-RSB potential (and browsing through values for the Parisi parameter $x$). Two of these solutions are unstable under the iteration scheme
\begin{align} \label{eq:iterations}
    \hat{q}_1^{t+1}=f(\hat{q}_1^t)\, ,\quad  {q}_1^{t+1}=g(q_1^t)\, ,
\end{align}
while the remaining two are stable. When browsing different values of $x$, we also observe a threshold value for $\knew$ for which the overall behavior of these fixed points changes. We will call this value $\kappa_{\rm break}(\alpha=0.5)\approx 0.455$. 
In Fig.~\ref{fig: 1-RSB comp} (left panel) we plot the complexity $\Sigma$ as a function of their entropy $s$ for the four branches. When tuning $x$ each solution describes a trajectory that we highlighted with either a dashed (unstable fixed point) or a full line (stable fixed point).
One key question arising from these results is how we should select the fixed-point branch that corresponds to the actual clusters of solutions in the problem. First, we clearly need to restrict to non-negative $\Sigma$ and non-negative $s$. Moreover, we know that the correct equilibrium state is given by the solution where the total entropy 
\begin{equation}
   s_{\rm total} = \Sigma+s ~\big|~ \Sigma \ge 0\, ,~ s\ge 0 
\end{equation}
is maximized. For the present model, this happens for $s=0$ when the (negative) slope of the $\Sigma(s)$ curve is infinite. This can be seen by realizing that the slope of the curve $\Sigma(s)$ is much smaller than $-1$. We recall that this slope is equal to $-x$, where $x$ is the Parisi parameter, as explained in Eq.~(\ref{eq: 1-RSB defs}). We highlighted this equilibrium point with a colored dot in the left panel of Fig.~\ref{fig: 1-RSB comp}. In particular, this point $\Sigma(0)$ corresponds to the equilibrium frozen 1RSB solution of the SBP problem with a value corresponding to one computed in \cite{aubin2019storage}. We note that this criterion for equilibrium is rather unusual among other models where the 1RSB solution was evaluated. Usually, either both $\Sigma>0$ and $s>0$ at the point where the negative slope $x =1$ corresponding to the so-called dynamical-1RSB phase, or the maximum is achieved when $\Sigma=0$ at a (negative) slope strictly between $0< x <1$ corresponding to the so-called static-1RSB phase. Here, we observe the equilibrium being achieved for $x\rightarrow +\infty$ corresponding to frozen-1RSB at equilibrium.  
Finally, we observe that for $\knew>\kappa_{\rm break}  \sim  0.455$ (still considering $\alpha=0.5$) the curve $\Sigma(s)$ for positive values of both $s$ and $\Sigma$ breaks into two branches.
Consequently, there is a finite range of values for the entropy $s$ where we do not obtain any fixed point. The meaning of such a gap is unclear, but it appears in other problems and their 1-RSB solution \cite{zdeborova2008constraint}.

\begin{figure}[!ht]
    \centering
    \includegraphics[width=0.49\textwidth]{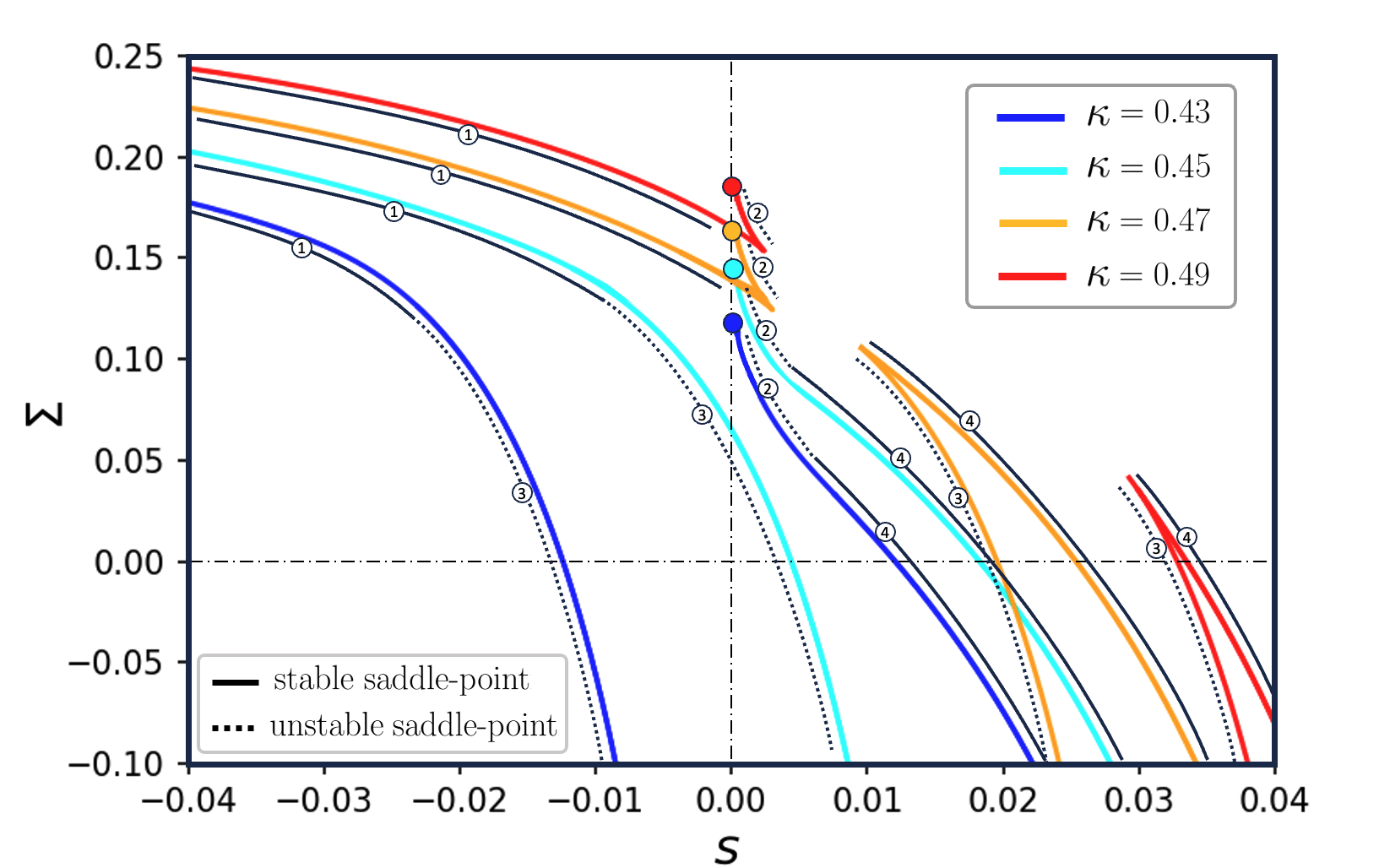}
    \includegraphics[width=0.49\textwidth]{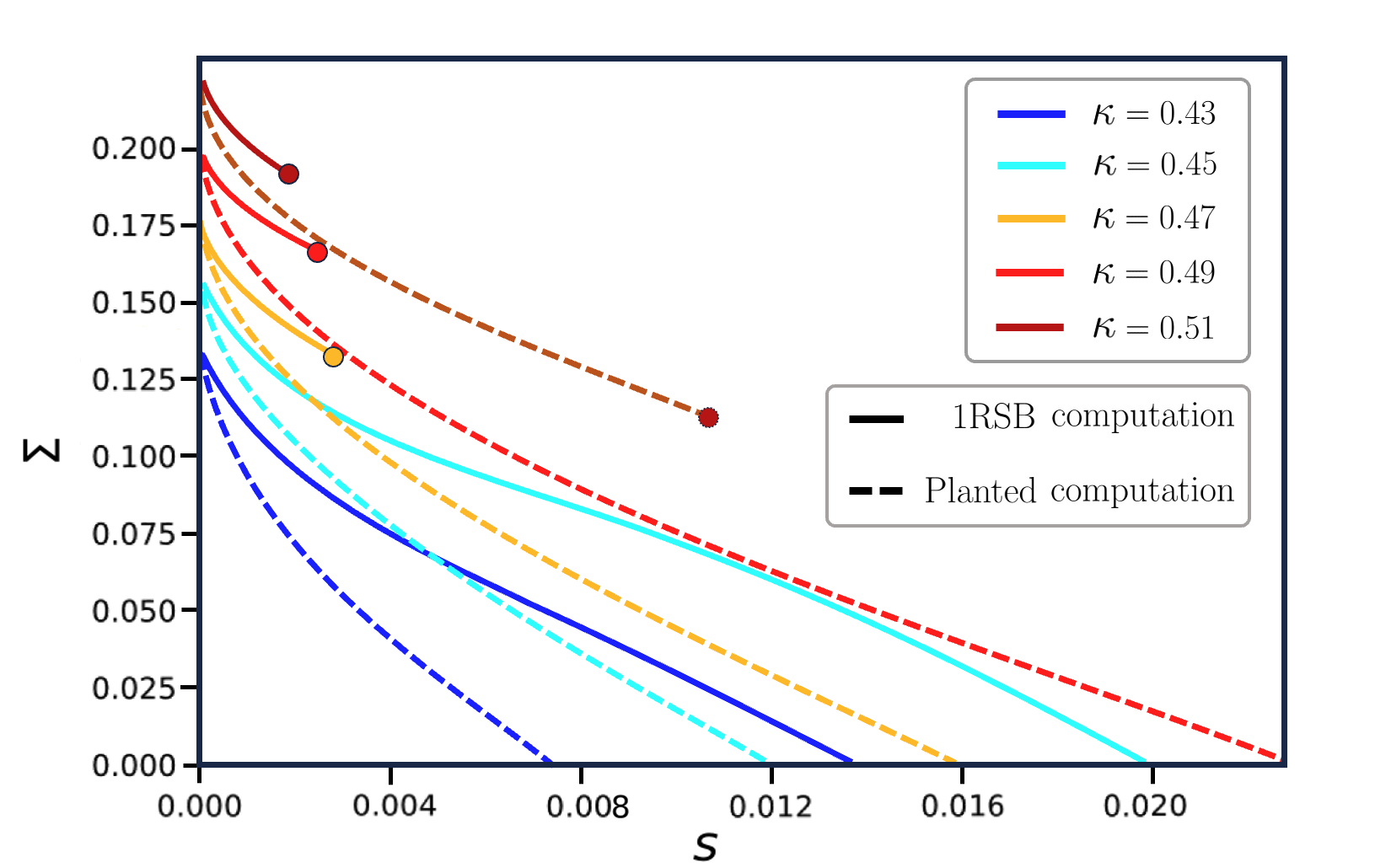}
    \caption{We plot in these two panels the complexity $\Sigma$ as a function of the entropy $s$, in both cases $\alpha=0.5$. On the left panel, we plot all the branches obtained when solving the saddle-point equations with respect to $q_1$ and $\hat{q}_1$ for the 1-RSB potential (and browsing through values for the Parisi parameter $x$). As a guide-to-the-eye we emphasize each of the four branches of solutions with either a full or dashed line. The full lines correspond to stable fixed points regarding the iteration scheme of Eq.~(\ref{eq:iterations}), while dashed ones correspond to unstable fixed points. We highlighted with colored dots the fact that certain branches stop at $s=0$ at a value of $\Sigma$ corresponding to the equilibrium solutions. To reach this point, the Parisi parameter $x$ has to be set to infinity.
    On the right panel, we compare the results obtained by the branches yielding this equilibrium complexity with the ones obtained with the previous planting method. We added colored dots when the fixed point (either of the planted or the 1-RSB saddle-point equations)
    with maximum entropy $s$ { was}  obtained for $\Sigma=0$.}
    \label{fig: 1-RSB comp}
\end{figure}

In Fig.~\ref{fig: 1-RSB comp} right panel, we plot the complexity $\Sigma$ as a function of the entropy $s$ selecting the branch that is an analytic continuation of the equilibrium $\Sigma(0)$ point and compare it to the one obtained via the planting approach. For this comparison, we need the local entropy in the planted model at finite values of $\alpha$ that is derived in appendix \ref{app: A}. 
We note that the two complexities exactly agree at $s=0$ as is expected because at $s=0$ both the complexities correspond to the total number of solutions at that $\kappa$. For $s>0$, the
two complexities have a similar shape, being clearly convex for small values of $s$. 
We note again that the overall values of $\Sigma$ are larger than the values of $s$ meaning that the slope actually takes a rather large value in the whole range of those curves. We recall that in the context of the 1-RSB computation this slope is equal to $-x$, where $x$ is the Parisi parameter. Taking its value much larger than one is not common in other models for which 1RSB was studied. This is likely the reason why these extensive size clusters were not described earlier in the literature for the binary perceptron.  
We further see that the 1RSB complexity, when it exists, is strictly larger than the one obtained via planting as again expected since via planting we obtain only some of the clusters of solution whereas the 1RSB computation should be able to count all of them. 
Then, when $\kappa > \kappa_{\rm break}$, the planted model predicts the existence of clusters with an internal entropy that lies inside the fixed-point gap of the 1-RSB approach. This indicates that the 1RSB solution does not fully describe the space of solutions in this case. This may have many causes. For example,  we may have missed a branch of fixed points in our analysis of the 1-RSB potential. Or, this region may involve a replica \emph{ansatz} with further symmetry breaking. Or perhaps these rare clusters simply cannot be obtained with a replica computation.
Finally, when $\knew>\kenerg(\alpha=0.5)\approx 0.499$,
the curves $\Sigma(s)$  obtained with planting stop at some positive values of $\Sigma$ and $s$ and thus look again qualitatively similar to the portion of the curve $\Sigma(s)$ that is obtained from 1RSB by analytically continuing from the equilibrium $\Sigma(s=0)$ point. 

Overall, the 1RSB approach evaluated at sufficiently larger values of the Parisi parameter $x$ identified clusters of extensive size in parts of the solution corresponding to a convex curve that is unstable under the iterations of the 1RSB fixed point equations. These curves are partly compatible with the complexity obtained from planting. Yet there are still regions of $\kappa, s$ for which we obtain extensive clusters of solution from the planting procedure but not from the 1RSB. The reason behind this paradox is left for future work. 

For small $\alpha$, the situation becomes actually clearer. In the next section, we will discuss this case.

\subsection{The $\alpha \to 0$ and $x\rightarrow+\infty$ limit}
We now focus, as in the first part of the paper, on the regime of small $\alpha$. 
Using our results from the planting computation, and anticipating a similarity of behaviour in the 1RSB, we can deduce the behavior of the Parisi parameter $x$ in the low $\alpha$ limit. Indeed, alike the 1-RSB computation, we saw that the planting approach probes clustered solutions. It also allows for computing their complexity $\Sigma$ and local entropy $s$, see Eq.~\eqref{eq: s planted} and \eqref{eq: sigma planted}. As mentioned above, in the context of a 1-RSB computation we have $\partial\Sigma/\partial s=-x$. Thus, if we plug-in the entropy and complexity from Eq.~\eqref{eq: s planted}, \eqref{eq: sigma planted} we can compute $\partial\Sigma/\partial s$ and estimate the Parisi parameter. 

By doing so we obtain two regimes for which the slope $\partial\Sigma/\partial s$ becomes infinite in the low $\alpha$ limit. First, when $\tkold\ll\tknew$ the entropy remains constant at first order in $\alpha$ while the complexity roughly jumps from $\Sigma_o$ to zero, see the left panel in Fig.~\ref{fig: local entropy and complexity}. It indicates that to recover these states with the 1-RSB computation we should set $x\gg1$.
The second regime for which we observe an infinite slope corresponds to $\tkold\approx\tknew$. Indeed, we have $\vert\partial \Sigma/ \partial s \vert\sim (\tknew-\tkold)^{-1}$ close to $\tkold=\tknew$. Consequently, if we want to probe the clusters with almost zero local entropy we should also set $x\gg1$ to obtain this regime. 

A last piece of information given by the planted model is that these clusters (in the two regimes mentioned above) correspond to a limit where $q=m^2\approx 1$. Thus, if we impose the same condition in the 1-RSB fixed-point equations,  we have that setting $q_1\approx1$ in Eq.~(\ref{eq: 1-RSB saddle 2}) implies $\hat q_1\gg 1$. Therefore, in order to find these clusters we will not only set $x\gg 1$ but we will also take $q_1\approx 1$ and  $\hat q_1\gg 1$.

The first regime we mentioned will be referred as the {\it maximum entropy} regime, while the second one will be referred as the {\it minimum entropy} regime. First, we see that the entropic contribution can be simplified identically in both regimes. Indeed, when setting $\hat q_1\gg1$ we obtain
\begin{align}
    \phi_{\rm in}\left[\sqrt{\hat{q}_1}z\right]&{\approx}\sqrt{\hat{q}_1}\vert z\vert+\log\left[1+e^{-2\sqrt{\hat{q}_1}\vert z\vert}\right]\\
    &{\approx}\sqrt{\hat{q}_1}\vert z\vert+e^{-2\sqrt{\hat{q}_1}\vert z\vert}\nonumber
\end{align}
which then yields
\begin{align}
    \log\left\{\int \mathcal{D}z \, e^{x\phi_{\rm in}[\sqrt{\hat{q}_1}z]}\right\}&\approx \log\left\{2\int_0^{+\infty} \mathcal{D}z \,e^{x\sqrt{\hat{q}_1}z+xe^{-2\sqrt{\hat{q}_1}z}} \right\}\\
    &\approx \log\left\{2\int_0^{+\infty} \mathcal{D}z \,e^{x\sqrt{\hat{q}_1}z}\left(1+xe^{-2\sqrt{\hat{q}_1}z}\right) \right\}\nonumber\\
    &\approx  \log\left\{2 \Bigg(e^{\frac{x^2 \hat{q}_1}{2}}+xe^{\frac{(x-2)^2 \hat{q}_1}{2}}\Bigg) \right\}\nonumber\\
    &\approx \log 2+\frac{x^2 \hat{q}_1}{2}+x e^{-2(x-1) \hat{q}_1}\, .\nonumber
\end{align}

As we will see in the following subsections, the simplification for the energetic term will be regime-dependent.

\subsubsection{Maximum entropy regime}
For the maximum entropy regime, we can again go back to the results from the planting model to help us make the correct approximation. We know, for example, that we should have $1-q_1\sim \kappa^2$ in the low $\alpha$ limit (as both quantities have the same scaling in $\alpha$). This implies that we should have
\begin{eqnarray}
\label{eq: order mag}
     \phi^{\knew}_{\rm out}[\sqrt{q_1}z,1-q_1]=\mathcal{O}(1) \quad \text{with}\quad z\in[-\knew,\knew].
\end{eqnarray}

Therefore, with $x\gg1$, we will approximate the energetic term with a saddle-point method. In other words, we will compute
\begin{align}
   \int \mathcal{D}z \, e^{x\phi^{\knew}_{\rm out}[\sqrt{q_1}z,1-q_1]}\approx e^{x\phi^{\knew}_{\rm out}[\sqrt{q_1}Z_0,1-q_1]} \int \mathcal{D}z \, e^{x\frac{q_1 (z-Z_0)^2}{2}\partial^2_\omega\phi^{\knew}_{\rm out}[\sqrt{q_1}Z_0,1-q_1]}
\end{align}
where $Z_0$ corresponds to the maxima of $\phi^{\knew}_{\rm out}[\sqrt{q_1}z,1-q_1]$. In particular, with this function we have $Z_0=0$.
By doing the saddle-point approximation in $Z_0=0$ we obtain
\begin{align}
    \alpha\log\left\{\int \mathcal{D}z \, e^{x\phi^{\knew}_{\rm out}[\sqrt{q_1}z,1-q_1]}\right\} \approx \alpha x \log\left[{\rm erf}\left(\frac{\knew}{1-q_1}\right)\right]-\frac{\alpha}{2}\log\left[1+\Delta x\right]
\end{align}
with
\begin{align}
    \Delta=\left\vert q_1 \partial_\omega^2 \phi^{\knew}_{\rm out}(0,1-q_1) \right\vert \, .
\end{align}
Finally, if we combine the simplification of both the entropic and energetic contributions the total entropy becomes
\begin{align}
\label{eq: 1-RSB free energy regime 1}
 \phi^{1-RSB}\approx&-\frac{x\hat{q}_1}{2}+\frac{x(1-x)q_1\hat{q}_1}{2}+\alpha x \log\left[{\rm erf}\Big(\frac{\knew}{\sqrt{2(1-q_1)}}\Big)\right]   \\
 &-\frac{\alpha}{2}\log(1+\Delta x)+\log(2)+\frac{x^2\hat{q}_1}{2}+xe^{-2(x-1)\hat{q}_1}\, \nonumber
\end{align}
and its fixed point equations are, at first order in $x$,
\begin{align}
\label{eq: 1-rsb saddle 1 ansatz 2}
    \frac{x(x-1)\hat{q}_1}{2}&=\frac{\alpha x \knew e^{\frac{-\knew^2}{2(1-q1 )}}}{\sqrt{2\pi}(1-q_1)^{3/2}  {\rm erf}\Big(\frac{\knew}{\sqrt{2(1-q_1)}}\Big)}\, ,\\
\label{eq: 1-rsb saddle 2 ansatz 2}
    q_1&=1-4e^{-2(x-1)\hat{q}_1}\, .
\end{align}

Now, we are able to draw a direct parallel with the planted system at low $\alpha$ and $\kold\ll \knew$. Indeed, if we use the correspondence $q_1\equiv m^2$ and $(x-1)\hat{q}_1\equiv \hat{m}$, Eqs.~(\ref{eq: 1-rsb saddle 1 ansatz 2},\ref{eq: 1-rsb saddle 2 ansatz 2}) are nothing but the fixed-point equations for the planted model (see Eqs.~\ref{eq: saddle point m approx 1}, \ref{eq: saddle point m approx 2}). This indicates a posteriori that the 1-RSB calculation enables us to recover the same clusters as in the planted model where we have set $\kold\ll \knew$.
To make the identification between the two approaches even more direct we can focus on the entropy and the complexity of this 1-RSB fixed-point. We obtain at first order in $x$ that
\begin{align}
    s&=\partial_x \phi^{1-RSB} =\frac{(1-q_1)x\hat{q}_1}{2}+\alpha\log\left[{\rm erf}\Big(\frac{\knew}{\sqrt{2(1-q_1)}}\Big)\right]\nonumber\\
    &=\left. \phi\left(r=\frac{1-m}{2}\right)\right\vert_{\alpha\ll 1,\,\kold\ll \knew,\, m\approx1}
\end{align}
and
\begin{align}
     \Sigma=\phi^{1-RSB}-xs=0\, .
\end{align}
Thus, at first order in $x$ these clustered states have exactly the same entropy and complexity as the ones from the planted system with $\kold\ll\knew$ (and $\alpha\ll 1$).

\subsubsection{Minimum entropy regime}
In this section, we want to probe clusters with a very small entropy. Now, keeping $\knew$ fixed, this means that we will have to set $q_1$ extremely close to one and eventually have $1-q_1\ll\kappa$ in order to go up to zero local entropy. This scaling between $q_1$ and $\knew$ is incompatible with the saddle-point approximation we performed for the maximum entropy regime, as Eq.~(\ref{eq: order mag}) is not verified anymore. In fact, in this case, we have to introduce an asymptotic expansion
\begin{align}
\label{eq: simplification phi_out}
     \phi^{\knew}_{\rm out}[\sqrt{q_1}z,1-q_1]&\approx\log\left\{\Theta\big(\knew/\sqrt{q_1}-\vert z\vert \big)-\sqrt{\frac{1-q_1}{2\pi}}\left[\frac{e^{\frac{-(\knew+\sqrt{q_1}z)^2}{2(1-q_1)}}}{\knew+\sqrt{q_1}z}+\frac{e^{\frac{-(\knew-\sqrt{q_1}z)^2}{2(1-q_1)}}}{\knew-\sqrt{q_1}z}\right]\right\}
\end{align}
where we used the identity
\begin{align}
    {\rm erf}(x)\underset{x\rightarrow+\infty}{\approx}1-\frac{e^{-x^2}}{x\sqrt{\pi}}\, .
\end{align}

We then estimate the interval of value of $z$ ($z\in[-Z_0,Z_0]$) for which $e^{x\phi^{\knew}_{\rm out}[\sqrt{q_1}z,1-q_1]}$ remains finite. In other words, we compute the value of $z$ for which the function $e^{x\phi^{\knew}_{\rm out}[\sqrt{q_1}z,1-q_1]}$ is equal to an arbitrary value $1/C$,
\begin{align}
   & e^{x\phi^{\knew}_{\rm out}[\sqrt{q_1}z,1-q_1]}=\frac{1}{C}\\
    \implies & x\phi^{\knew}_{\rm out}[\sqrt{q_1}z,1-q_1]=-\log C \nonumber\\
    \implies & \sqrt{\frac{1-q_1}{2\pi}}\times\frac{e^{\frac{-(\knew-\sqrt{q_1}z)^2}{2(1-q_1)}}}{\knew-\sqrt{q_1}z}\underset{\frac{\knew}{\sqrt{q_1}}>z>0}{=}\frac{\log C}{x}\nonumber\\
    \implies & \frac{\knew-\sqrt{q_1}z}{\sqrt{2(1-q_1)}}\underset{\frac{\knew}{\sqrt{q_1}}>z>0}{=}\sqrt{\frac{W_0\left(\frac{2}{a^2}\right)}{2}}\quad \text{with}\quad a=\frac{2 \log C}{x\sqrt{\pi}}\nonumber\\
     \implies & \frac{\knew-\sqrt{q_1}z}{\sqrt{2(1-q_1)}}\underset{\underset{\frac{\knew}{\sqrt{q_1}}>z>0}{ x\rightarrow +\infty}}{=}\sqrt{\frac{\log{x^2}}{2}}\nonumber\\
     \implies & z \underset{\underset{\frac{\knew}{\sqrt{q_1}}>z>0}{ x\rightarrow +\infty}}{=}\frac{\knew-\sqrt{2(1-q_1)\log x}}{\sqrt{q_1}}\, \nonumber
\end{align}
where $W_0(.)$ is the Lambert function with branch index $k=0$.
This computation thus shows that $e^{x\phi^{\knew}_{\rm out}[\sqrt{q_1}z,1-q_1]}$ jumps from $1$ to any arbitrary fraction $1/C$ exactly at 
\begin{align}
    Z_0 &\underset{\underset{\frac{\knew}{\sqrt{q_1}}>z>0}{ x\rightarrow +\infty}}{=}\frac{\knew-\sqrt{2(1-q_1)\log x}}{\sqrt{q_1}}\, . 
\end{align}
In this limit, we thus have for the energetic contribution
\begin{align}
    \alpha \log\left\{\int \mathcal{D}z \, e^{x\phi^{\knew}_{\rm out}[\sqrt{q_1}z,1-q_1]}\right\}&={\alpha}\log\left\{\int_{-Z_0}^{Z_0}{D}z\right\}={\alpha}\log\left\{{\rm erf}\left(\frac{\knew-\sqrt{2(1-q_1)\log x}}{\sqrt{2 q_1}}\right)\right\}\, .
\end{align}
And finally, if we put together the simplified entropic and energetic contribution to the 1-RSB potential we obtain
\begin{align}
 \label{eq: 1-RSB free energy regime 2}
 \phi^{1-RSB} \approx&-\frac{x\hat{q}_1}{2}+\frac{x(1-x)q_1\hat{q}_1}{2}+{\alpha}\log\left\{{\rm erf}\left(\frac{\knew-\sqrt{2(1-q_1)\log x}}{\sqrt{2 q_1}}\right)\right\}   \\
 &+\log(2)+\frac{x^2\hat{q}_1}{2}+xe^{-2(x-1)\hat{q}_1}\, \nonumber
\end{align}
and the corresponding fixed point equations are (at first order in $x$)
\begin{align}
\label{eq: 1-rsb saddle 1 ansatz 3}
   \frac{x^2\hat{q}_1}{2}&=\alpha\frac{e^{-{\knew}^2/2}}{{\rm erf}\big(\frac{\knew}{\sqrt{2}}\big)}\sqrt{\frac{\log x}{\pi (1-q_1)}}\, ,\\
\label{eq: 1-rsb saddle 2 ansatz 4}
    q_1&=1-4e^{-2(x-1)\hat{q}_1}\, .
\end{align}

The combination of these two fixed-point equations implies $\knew\gg\sqrt{2(1-q_1)\log x}$.
Consequently, the term $x^2(1-q_1)\hat{q}_1$ can be neglected and the 1-RSB free energy boils down to
\begin{align}
     \phi^{1-RSB}\approx&\alpha\log\left\{{\rm erf}\left(\frac{\knew}{\sqrt{2 }}\right)\right\} +\log(2)\, .
\end{align}
We thus recover the case of the planted system at $\kold=\knew$ as we have
\begin{align}
        s&=\partial_x \phi^{1-RSB} =0\, ,\\
        \Sigma&=  \phi^{1-RSB}\approx\alpha\log\left\{{\rm erf}\left(\frac{\knew}{\sqrt{2 }}\right)\right\} +\log(2)\,.
\end{align}
In \cite{aubin2019storage} the authors showed in a more standard computation that these equilibrium configurations (verifying a frozen 1-RSB structure) can also be obtained by imposing $q_0=0$, $q_1=1$ and $x=1$.

 In Fig.~\ref{fig: planted vs 1-RSB} we display for several values of $\tknew=\knew\sqrt{-\log(\alpha)/\alpha}$ the complexity as a function of the entropy. The light-colored full lines correspond to the results obtained with the planted model. The dashed and dotted lines correspond respectively to the maximum and minimum entropy fixed-point branches of the 1-RSB free energy. To obtain them we solved the fixed-point equations in each regime for large but finite Parisi parameter $x$. 
  As shown by the previous computations each end of the curve sees a close match between the planting approach and one of the $x\rightarrow+\infty$ regimes.
  This leads us to conjecture that in the limit of small $\alpha$, the planted and 1-RSB $\Sigma(s)$ curves match exactly. In other words, the planting approach actually captures the dominant clusters for each size $s$. 

\begin{figure}[!th]
    \centering
    \includegraphics[width=0.5\textwidth]{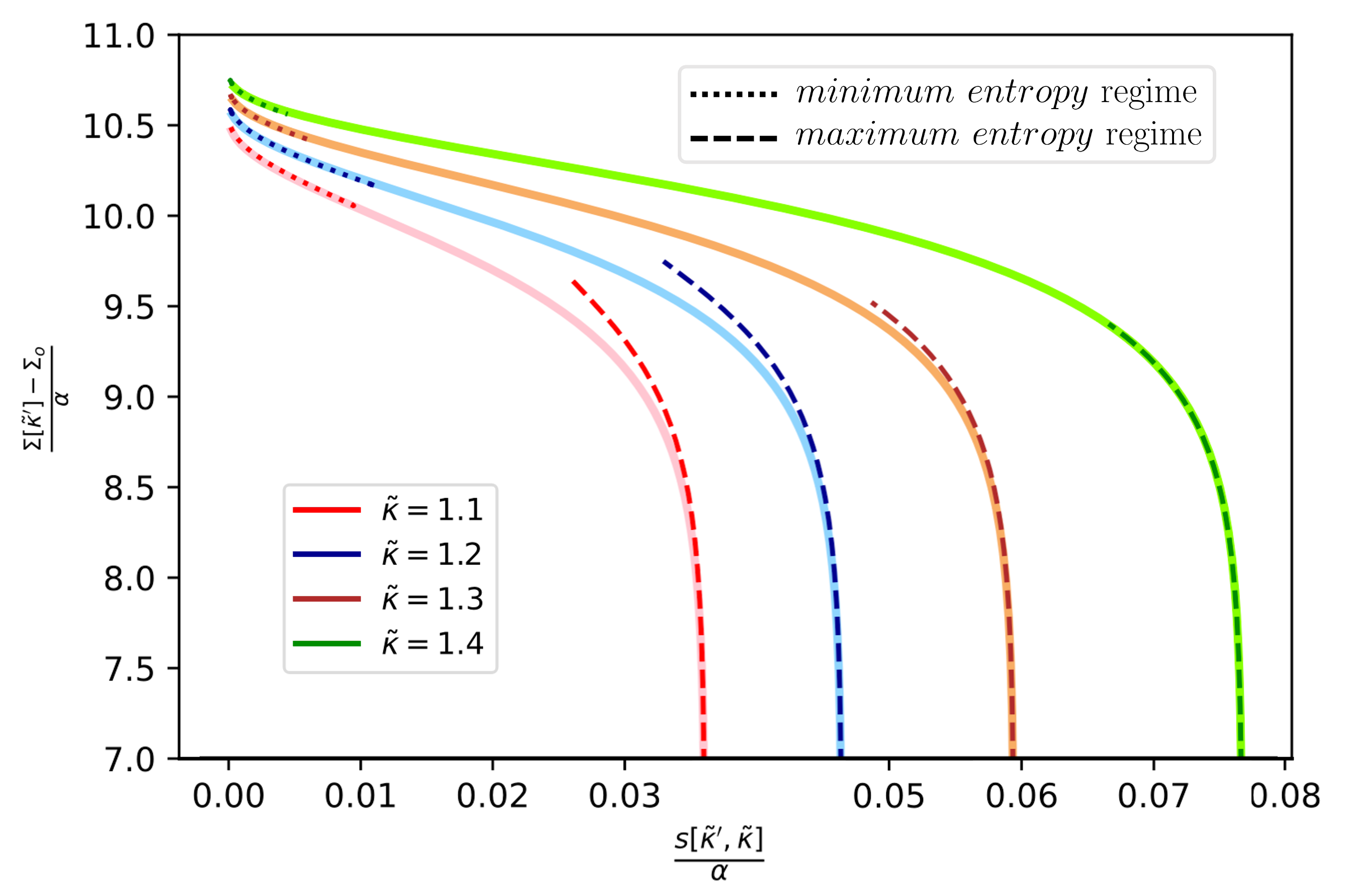}
    \caption{We plot the complexity $\Sigma$ as a function of the entropy $s$ for several values of $\tknew$. The dashed and dotted curves correspond to the maximum and minimum entropy regime and are obtained by optimizing the potentials Eqs.~(\ref{eq: 1-RSB free energy regime 1}) and (\ref{eq: 1-RSB free energy regime 2}) over $q_1$ and $\hat{q}_1$ with large but finite values of the Parisi parameter $x$. 
    In order to compare the 1-RSB computation with the planting approach, the full curves correspond to the complexity and entropy obtained with the planting computation.}
    \label{fig: planted vs 1-RSB}
\end{figure}

\vspace{-4mm}

\section{Conclusion and discussion}
\label{sec:conclusion}

We study the local entropy in the SBP problem around solutions planted at a smaller margin. Our results are rigorous in the limit of small $\alpha$, conditional on a condition of concentration of a certain entropy. We identify clusters of solutions of an extensive entropy as local maximizers of this local entropy. We identify two thresholds $\kenerg$ and $\kentro$ that we consider of particular interest.  $\kentro$ is the smallest $\knew$ at which the planted clusters and the corresponding maximum disappear, thus presumably melting into an extended structure that may be accessible to efficient algorithms. $\kenerg$ is a value above which there are solutions at all distances from the planted solutions and as such is an upper bound on the overlap gap property threshold.

We then investigated the 1RSB solution of the symmetric binary perceptron problem and showed how it allows us to identify extensive clusters of solutions without introducing concepts that are not present in the canonical 1RSB computation already. It suffices to consider large values of the Parisi parameter $x$ and both convex and concave parts of the $\Sigma(s)$ curve. We discuss how the equilibrium frozen-1RSB is recovered in the $x \to \infty$ limit. While this resolves some open questions about the 1RSB solution for binary perceptions, we conclude that the 1RSB calculation is incomplete at finite $\alpha$ as we did not find solutions corresponding to all the extensive clusters identified by the planting procedure.

We further showed that while, in general, the planting procedure we study does not describe all the rare clusters, in the limit of small $\alpha$ it seems that the $\Sigma(s)$ obtained via planting is exactly the same as the one obtained from the 1RSB. This leads us to conjecture that in the limit of small $\alpha$ the planting actually describes almost all clusters of a given size.

\acknowledgments
We acknowledge funding from the Swiss National Science Foundation grants OperaGOST (grant number 200390) and SMArtNet (grant number 212049).
We also thank David Gamarnik, Carlo Lucibello and Riccardo Zecchina for enlightening discussions on these problems.

\newpage


\newpage
\appendix

\section{The local entropy in the planted model}
\label{app: A}
\subsection{The local entropy for generic parameters $\alpha$, $\kold$ and $\knew$}

While in the main text we derive the local entropy rigorously at small $\alpha$, in this appendix we give the expressions for a generic value of $\alpha$. This we have not established rigorously and instead resorted to the replica method applied to the contiguous planted system. 
Following \cite{aubin2019storage} the replica symmetric (RS) free energy  for the planted binary perceptron is
\begin{align}
\label{eq: true local free energy}
\phi^{\kold,\knew}_{\rm planted}[q,\hat{q},m,\hat{m}]=\frac{1}{N}\E_{\bG,\bx_0} \log \left[Z\left(\bx_0 , \knew , r=\frac{1-m}{2}\right) \right] =&-\frac{1-q}{2}\hat{q}-m\hat{m}+\alpha\int \mathcal{D}z\,  dw P_{\kold}[w]\,\phi^{\knew}_{\rm out}[w,z,q,m]\\
&+\int\mathcal{D}z \,\left\{\phi_{\rm in}[z,\hat{q},\hat{m}]\right\} \, ,\nonumber
\end{align}
with
\begin{align}
\phi^{\knew}_{\rm out}[w,z,q,m]&=\log(\Phi^{\knew}_{\rm out}[\omega=mw+\sqrt{q-m^2}z,V=1-q])\\
&=\log\Bigg[\int_{\frac{-mw-\sqrt{q-m^2}z-\knew}{\sqrt{1-q}}}^{\frac{-mw-\sqrt{q-m^2}z+\knew}{\sqrt{1-q}}} \mathcal{D}u\Bigg]\nonumber\\
&=\log\Bigg\{\frac{1}{2}\erf{\frac{\knew-mw-\sqrt{q-m^2}z}{\sqrt{2(1-q)}}}\nonumber\\
&\qquad      +\frac{1}{2}\erf{\frac{\knew+mw+\sqrt{q-m^2}z}{\sqrt{2(1-q)}}}\Bigg\}\, ,\nonumber\\
\phi_{\rm in}[z,\hat{q},\hat{m}]
&=\log(\Phi_{\rm in}[B=\hat{m}x_o+\sqrt{\hat{q}}z])\\
&=\log\Bigg[\sum_{x=\pm 1}e^{(\hat{m}x_o+\sqrt{\hat{q}}z)x}\Bigg]\nonumber\\
&=\log\left[2\cosh(\hat{m}x_o+\sqrt{\hat{q}}z)\right]\nonumber
\end{align}
and where $\mathcal{D}z$ (as well as $\mathcal{D}x$ later) represents an integration with a scalar normal-distributed variable. Moreover, in the following we will take $P_{\kold}[w] = e^{-w^2/2}\one\{\vert w \vert \le \kold\}/\mathcal{N}_{\kold}$ where $\mathcal{N}_{\kold}$ is the prefactor normalizing the distribution. This distribution is given by the typical clusters dominating the probability measure $P^{\bm{g}}_{\kold}[\bm{x}]$. The variable $x_o$ corresponds to the planted configuration and can take the values $\pm 1$ indifferently. 

This free energy has saddle points for a set of parameters $\{q,\hat{q},m,\hat{m}\}$ verifying
 \begin{eqnarray}
 \label{eq: SE m}
 \hat{m}&=&\alpha\int \mathcal{D}z\,  dw P_{\kold}[w] \partial_m \phi^{\knew}_{\rm out}[w,z,q,m]\, ,\\
  \label{eq: SE q}
 \hat{q}&=&-2\alpha\int \mathcal{D}z\,  dw P_{\kold}[w] \partial_q \phi^{\knew}_{\rm out}[w,z,q,m]\, ,\\
 m&=&\int \mathcal{D}z\,x_o\, {\rm tanh}(\hat{m}x_o+\sqrt{\hat{q}}z)=\int \mathcal{D}z\, {\rm tanh}(\hat{m}+\sqrt{\hat{q}}z)       \, ,\\
 \label{eq: SE q_hat}
 q&=&1-\frac{1}{\sqrt{\hat{q}}}\int \mathcal{D}z\, z\, {\rm tanh}(\hat{m}x_o+\sqrt{\hat{q}}z)=\int \mathcal{D}z\, {\rm tanh}^2(\hat{m}+\sqrt{\hat{q}}z)\, .
 \end{eqnarray}

In Fig.~\ref{fig: Complexity vs entropy finite alpha} we plot the complexity, see Eq.~(\ref{eq: complexity}), as a function of the local entropy of the planted model. As explained in Sec.~\ref{subsec: Complexity vs entropy} the entropy is obtained by evaluating $\phi^{\kold,\knew}_{\rm planted}$ with its only non-trivial saddle-point over $\{q,m,\hat{q},\hat{m}\}$. The complexity is the exponential number of possible typical solutions for the binary perceptron at $\alpha$ and $\kold$, see Eq.~(\ref{eq: complexity}), which corresponds to the number of possibilities of planting the configuration. The results we display in this figure are obtained for $\alpha=10^{-2}$.

\begin{figure}
    \centering
    \includegraphics[width=0.5\textwidth]{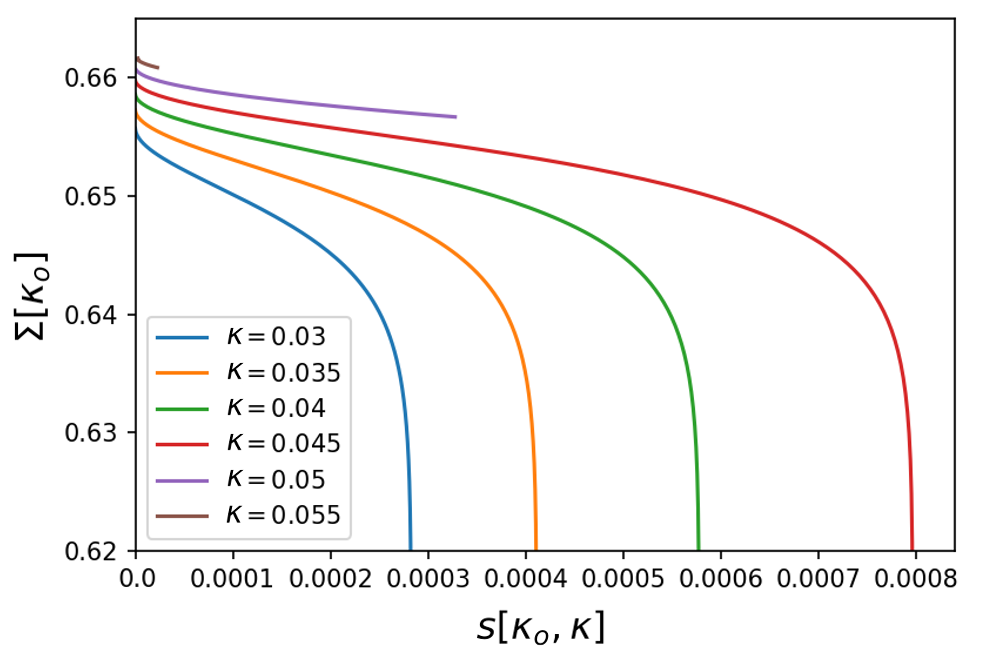}
    \caption{We plot the complexity $\Sigma[\kold]$ as a function of the local entropy $s[\kold,\knew]$ for $\alpha=10^{-2}$ and $\knew\in[0.03,0.06]$ using the planting approach. The function $s[\kold,\knew]$ is obtained by computing the non-trivial optimizer $\{q,m,\hat{q},\hat{m}\}$ for the local free entropy $\phi^{\kold,\knew}_{\rm planted}$ and evaluating its entropy, see Eqs.~(\ref{eq: true local free energy}) to (\ref{eq: SE q_hat}). The complexity corresponds to the exponential number of typical solutions for the binary perceptron at $\alpha$ and $\kold$, see Eq.~(\ref{eq: complexity}). }
    \label{fig: Complexity vs entropy finite alpha}
\end{figure}

\subsection{Results in the low $\alpha$ limit}
In this section, we focus on the planted model in the limit $\alpha\ll 1$. Indeed, it can be shown in this case that the non-trivial saddle point of the planted free energy verifies $\hat{q}\ll\hat{m}$, $q-m^2\ll 1-m$ and $1-m=o(1)$. We start first by rewriting the energetic contribution as
\begin{align}
    \alpha\int \mathcal{D}z\,  dw P_{\kold}[w]\,\phi^{\knew}_{\rm out}[w,z,q,m]&=\alpha\int \mathcal{D}B\, \frac{1}{2\,\mathcal{N}_{\kold}}\left\{\erf{\frac{\sqrt{q}\kold+mB}{\sqrt{q-m^2}}}+\erf{\frac{\sqrt{q}\kold-mB}{\sqrt{q-m^2}}}\right\}\\
    &\hspace{1.3cm}\times \log\left\{\frac{1}{2}\erf{\frac{\knew+B}{\sqrt{1-q}}}+\frac{1}{2}\erf{\frac{\knew-B}{\sqrt{1-q}}}\right\}\nonumber
\end{align}
with the change of variable
\begin{align}
    B=\frac{mw+\sqrt{q-m^2}z}{\sqrt{q}}\, , \quad B^\top=\frac{-mz+\sqrt{q-m^2}w}{\sqrt{q}}
\end{align}
 and integrating over $B^\top$. Recalling the fact that we focus on probing a saddle-point with $q-m^2\ll1-m$ and $1-m=o(1)$ (i.e. $1-q=o(1)$), we have for the energetic contribution
 \begin{align}
 \label{eq: saddle m low alpha}
    \alpha\int \mathcal{D}z\,  dw P_{\kold}[w]\,\phi^{\knew}_{\rm out}[w,z,q,m]&=\alpha\int \mathcal{D}B\, \frac{1}{2\,\mathcal{N}_{\kold}}\left\{\erf{\frac{\kold+B}{\sqrt{2(q-m^2)}}}+\erf{\frac{\kold-B}{\sqrt{2(q-m^2)}}}\right\}\\
    &\hspace{1.2cm}\times \log\left\{\frac{1}{2}\erf{\frac{\knew+B}{\sqrt{2(1-q)}}}+\frac{1}{2}\erf{\frac{\knew-B}{\sqrt{2(1-q)}}}\right\}+\mathcal{O}(1-q)\nonumber\, .
\end{align}
and the saddle point equations over $q$ and $m$ become
\begin{align}
    \hat{m}&= \frac{2\alpha m}{\mathcal{N}_{\kold}} \int \mathcal{D}B\left\{\frac{(\kold+B)e^{\frac{-(\kold+B)^2}{2(q-m^2)}}}{\sqrt{2\pi}(q-m^2)^{3/2}}+\frac{(\kold-B)e^{\frac{-(\kold-B)^2}{2(q-m^2)}}}{\sqrt{2\pi}(q-m^2)^{3/2}}\right\}\log\left\{\frac{1}{2}\erf{\frac{\knew+B}{\sqrt{2(1-q)}}}+\frac{1}{2}\erf{\frac{\knew-B}{\sqrt{2(1-q)}}}\right\}\, \\
    &+\mathcal{O}\left(\frac{1}{\sqrt{1-q}}\right)\nonumber,
\end{align}
\begin{align}
\label{eq: saddle q low alpha}
    \hat{q}=& -\frac{\alpha}{\mathcal{N}_{\kold}}  \int \mathcal{D}B\left\{\frac{(\kold+B)e^{\frac{-(\kold+B)^2}{2(q-m^2)}}}{\sqrt{2\pi}(q-m^2)^{3/2}}+\frac{(\kold-B)e^{\frac{-(\kold-B)^2}{2(q-m^2)}}}{\sqrt{2\pi}(q-m^2)^{3/2}}\right\}\log\left\{\frac{1}{2}\erf{\frac{\knew+B}{\sqrt{2(1-q)}}}+\frac{1}{2}\erf{\frac{\knew-B}{\sqrt{2(1-q)}}}\right\}\\
    &+\frac{\alpha}{\mathcal{N}_{\kold}}\int \mathcal{D}B\, \frac{\erf{\frac{\kold+B}{\sqrt{2(q-m^2)}}}+\erf{\frac{\kold-B}{\sqrt{2(q-m^2)}}}}{\erf{\frac{\knew+B}{\sqrt{2(1-q)}}}+\erf{\frac{\knew-B}{\sqrt{2(1-q)}}}}\left\{\frac{(\knew+B)e^{\frac{-(\knew+B)^2}{2(1-q)}}}{\sqrt{2\pi}(1-q)^{3/2}}+\frac{(\knew-B)e^{\frac{-(\knew-B)^2}{2(1-q)}}}{\sqrt{2\pi}(1-q)^{3/2}}\right\}+\mathcal{O}\left(\frac{1}{\sqrt{1-q}}\right)\nonumber\\
    =&-\frac{\hat{m}}{2m}+\frac{\alpha}{\mathcal{N}_{\kold}}\int \mathcal{D}B\, \frac{\erf{\frac{\kold+B}{\sqrt{2(q-m^2)}}}+\erf{\frac{\kold-B}{\sqrt{2(q-m^2)}}}}{\erf{\frac{\knew+B}{\sqrt{2(1-q)}}}+\erf{\frac{\knew-B}{\sqrt{2(1-q)}}}}\left\{\frac{(\knew+B)e^{\frac{-(\knew+B)^2}{2(1-q)}}}{\sqrt{2\pi}(1-q)^{3/2}}+\frac{(\knew-B)e^{\frac{-(\knew-B)^2}{2(1-q)}}}{\sqrt{2\pi}(1-q)^{3/2}}\right\}+\mathcal{O}\left(\frac{1}{\sqrt{1-q}}\right)\nonumber\, .
\end{align}
Now with $q-m^2=o(1)$ the energetic contribution becomes
 \begin{align}
    \frac{\alpha}{\mathcal{N}_{\kold}}\int \mathcal{D}z\,  dw P_{\kold}[w]\,\phi^{\knew}_{\rm out}[w,z,q,m]=&\frac{\alpha}{\mathcal{N}_{\kold}}\int \mathcal{D}B\, \Theta(\kold-\vert B\vert)\,\log\left\{\frac{1}{2}\erf{\frac{\knew+B}{\sqrt{2(1-m^2)}}}+\frac{1}{2}\erf{\frac{\knew-B}{\sqrt{2(1-m^2)}}}\right\}\\
    &+\mathcal{O}\left({q-m^2}\right)\nonumber
\end{align}
and we obtain using the saddle point with respect to $m$
\begin{align}
\label{eq: saddle appendix 1}
\hat{m}=\frac{2\alpha m}{\mathcal{N}_{\kold}}\int \mathcal{D}B\frac{\Theta(\kold-\vert B\vert)}{\frac{1}{2}\erf{\frac{\knew+B}{\sqrt{2(1-m^2)}}}+\frac{1}{2}\erf{\frac{\knew-B}{\sqrt{2(1-m^2)}}}}   \left\{\frac{(\knew+B)e^{\frac{-(\knew+B)^2}{2(1-m^2)}}}{\sqrt{2\pi}(1-m^2)^{3/2}}+\frac{(\knew-B)e^{\frac{-(\knew-B)^2}{2(1-m^2)}}}{\sqrt{2\pi}(1-m^2)^{3/2}}\right\}+\mathcal{O}\left(\frac{q-m^2}{(1-m^2)^{3/2}}\right)\, .
\end{align}
If we inject this solution in Eq.~(\ref{eq: saddle q low alpha}) we obtain $\hat{q}=o(1/(1-m)^{3/2})$, while Eq.~(\ref{eq: saddle appendix 1}) implies $\hat{m}=\mathcal{O}(1/(1-m)^{3/2})$. Finally, setting $\hat{q}\ll\hat{m}$ in Eq.~(\ref{eq: SE m}, \ref{eq: SE q}) we can derive 
\begin{align}
     q&=\int \mathcal{D}z\, {\rm tanh}^2(\hat{m}+\sqrt{\hat{q}}z)
      \hspace{-0.1cm}\underset{\hat{q}\ll\hat{m}}{=}{\rm tanh}^2(\hat{m})
    \, ,\\
    \label{eq: saddle appendix 2}
      m&=\int \mathcal{D}z\, {\rm tanh}(\hat{m}+\sqrt{\hat{q}}z)
      \hspace{-0.1cm}\underset{\hat{q}\ll\hat{m}}{=}{\rm tanh}(\hat{m})\, .
\end{align}
In a nutshell, by setting $q-m^2\ll1-m$ and $1-m=o(1)$ we obtain $\hat{m}=\mathcal{O}(1/(1-m)^{3/2})$ and $\hat{q}=o(1/(1-m)^{3/2})$. Then, we showed that $\hat{q}\ll \hat{m}$ implies $q-m^2\ll1-m$. At this stage we still need to check if closing Eqs. (\ref{eq: saddle appendix 1}) and (\ref{eq: saddle appendix 2}) will indeed provide $1-m=o(1)$

In fact, using $q-m^2\ll 1-m$, $1-m=o(1)$, $\hat{q}\ll\hat{m}$ along with Eq.~(\ref{eq: saddle appendix 2}) we have
\begin{align}
    \phi^{\kold,\knew}_{\rm planted}[m]=&-\frac{1-m}{2}\log{\left[\frac{1-m}{2}\right]}+\frac{\alpha }{\mathcal{N}_{\kold}}\int\! \mathcal{D}B\, \Theta(\kold-\vert B\vert)\,\log\left\{\frac{1}{2}\erf{\frac{\knew+B}{\sqrt{2(1-m^2)}}}+\frac{1}{2}\erf{\frac{\knew-B}{\sqrt{2(1-m^2)}}}\right\}\\
    &+o\left(\frac{1-m}{2}\log{\left[\frac{1-m}{2}\right]}\right)\,\nonumber.
\end{align}
Then, the planted free energy is a non-trivial function for only a restricted range of parameters $\knew$ and $m$. It happens when the entropic and energetic contributions compete with each other. This leads us to introduce a rescaling of the form $1-{m}^2= -\alpha  \tilde{r} /\log(\alpha)$, $\kold={\tkold} \sqrt{-\alpha /\log(\alpha)}$ and $\knew={\tknew} \sqrt{-\alpha /\log(\alpha)}$. For $\alpha\ll 1$, this rescaling enables to check directly that $1-m=o(1)$ as we have $\alpha/\log(\alpha)\ll1$. In other words, if there exist a solution for the saddle-point equations (\ref{eq: saddle appendix 1}) and (\ref{eq: saddle appendix 2}) it will be for $1-m=o(1)$ in the low $\alpha$ limit. If we now rewrite the above planted free energy with this  rescaling we obtain
\begin{align}
    \phi^{\tkold,\tknew}_{\rm planted}[\tilde{r}]&= \frac{ \alpha\tilde{r}}{4}+\frac{\alpha }{\tilde{\mathcal{N}}_{\kold}}\int\! \mathcal{D}B\, \Theta(\tkold-\vert B\vert)\,\log\left\{\frac{1}{2}\erf{\frac{\tknew+B}{\sqrt{2\tilde{r}}}}+\frac{1}{2}\erf{\frac{\tknew-B}{\sqrt{2\tilde{r}}}}\right\}+o(\alpha)\, ,\\
\label{eq: local max planting rescaled app}
   1&=\frac{4}{\tilde{\mathcal{N}}_{\kold}}\int \mathcal{D}B\frac{\Theta(\tkold-\vert B\vert)}{\erf{\frac{\tknew+B}{\sqrt{2\tilde{r}}}}+\erf{\frac{\tknew-B}{\sqrt{2\tilde{r}}}}}   \left\{\frac{(\tknew+B)e^{\frac{-(\tknew+B)^2}{2\tilde{r}}}}{\sqrt{2\pi}\tilde{r}^{3/2}}+\frac{(\tknew-B)e^{\frac{-(\tknew-B)^2}{2\tilde{r}}}}{\sqrt{2\pi}\tilde{r}^{3/2}}\right\}+o(1)\, 
\end{align}
with
\begin{align}
    \tilde{\mathcal{N}}_{\kold}=\int \mathcal{D}B\, \Theta(\tkold-\vert B\vert)\,.
\end{align}

A last simplification can be made if we set $\tkold=0$, for example this condition is verified when planting at $\kappa_{\sSAT}$ as $\kappa_{\sSAT}\sqrt{-\log(\alpha)/\alpha}\rightarrow 0$ when $\alpha$ is sent to zero. In this case the integration over $B$ can be dropped and we obtain for the local entropy and its saddle-point equation
\begin{align}
    \phi^{\tkold,\tknew}_{\rm planted}[\tilde{r}]=\frac{\tilde{r}}{4}+\alpha\log\left\{   \erf{\frac{\tknew}{\sqrt{2\tilde{r}}}} \right\}+o(\alpha)\, ,\\
     1=\frac{4\tknew e^{\frac{-\tknew^2}{2\tilde{r}}}}{\sqrt{2\pi}\tilde{r}^{3/2}\erf{\frac{\tknew}{\sqrt{2\tilde{r}}}}}+o(1) \, .
\end{align}

As a final note, and in order to draw a parallel with the 1-RSB computation, we outline here that the saddle-point equations (\ref{eq: saddle appendix 1}) and (\ref{eq: saddle appendix 2}) become after setting $m\rightarrow 1$  ($\hat m\gg1$) and $\kold=\kappa_{\sSAT}\rightarrow0$
\begin{align}
  \label{eq: saddle point m approx 1}
    m&= 1-2e^{-2\hat{m}}\, ,\\
    \label{eq: saddle point m approx 2}
    \hat{m}&=\frac{2\alpha\knew m}{\sqrt{2\pi(1-m^2)}(1-m^2)\erf{\frac{\knew}{\sqrt{2(1-m^2)}}}}e^{\frac{-\knew^2}{2(1-m^2)}}\,.
\end{align}

\end{document}